\numberwithin{equation}{section}
\crefname{section}{Section}{Sections}
\crefname{figure}{Figure}{Figures}
\crefname{table}{Table}{Tables}
\crefname{equation}{}{}
\crefname{theorem}{Theorem}{Theorems}
\crefname{lemma}{Lemma}{Lemmas}
\crefname{remark}{Remark}{Remarks}
\crefname{problem}{Problem}{Subproblems}
\newtheorem{theorem}{Theorem}[section]
\newtheorem{remark}{Remark}[section]
\newtheorem{lemma}{Lemma}[section]
\newtheorem{proposition}{Proposition}[section]
\theoremstyle{definition}
\crefname{ip}{Co-inversion Problem}{ips}
\newtheoremstyle{MyThmStyle}
{}
{}
{}
{}
{\bfseries}
{}
{ }
{\thmname{#1\thmnumber{ #2\hspace{0.5em}}}\thmnote{(#3)}}
\theoremstyle{MyThmStyle}
\crefname{subisp}{inverse source problem}{ips}
\crefname{subiop}{inverse obstacle problem}{ips}
\definecolor{bananamania}{rgb}{0.98, 0.91, 0.71}
\begin{document}
	
	\title{Two operator splitting methods for three-dimensional stochastic Maxwell equations with multiplicative noise}

	\author{
		Liying Zhang\thanks{School of Mathematical Science, China University of Mining and Technology, Beijing 100083, China, lyzhang@lsec.cc.ac.cn }, Xinyue Kang\thanks{School of Mathematical Science, China University of Mining and Technology, Beijing 100083, China, SQT2300702045@student.cumtb.edu.cn },
	Lihai Ji\thanks{
		Institute of Applied Physics and Computational Mathematics, Beijing 100094, China, jilihai@lsec.cc.ac.cn} \thanks{Shanghai Zhangjiang Institute of Mathematics, Shanghai 201203, China, jilihai@lsec.cc.ac.cn}}
	
	\date{}
	\maketitle

\begin{abstract}
	In this paper, we develop two energy-preserving splitting methods for solving three-dimensional stochastic Maxwell equations driven by multiplicative noise. We use operator splitting methods to decouple stochastic Maxwell equations into simple one-dimensional subsystems and construct two stochastic splitting methods, Splitting Method I and Splitting Method II, through a combination of spatial compact difference methods and the midpoint rule in time discretization for the deterministic parts, and exact unitary analytical solutions for the stochastic parts. Theoretical proofs show that both methods strictly preserve the discrete energy conservation law. Finally, numerical experiments fully verify the energy conservation of the methods and demonstrate that the temporal convergence order of the two splitting methods is first-order.
\end{abstract}
	
\textbf{Keywords:} stochastic Maxwell equations, operator splitting, compact difference methods, energy conservation.


\section{Introduction}
In this paper, we consider the following stochastic Maxwell equations
\begin{equation}\label{sto_maxwell_equations}
	\begin{split}
		&\varepsilon{\rm d}{\bf E}={\rm \bf curl}~{\bf H}{\rm d}t-\lambda{\bf H}\circ{\rm d}W,~(t,{\bf x})\in(0,~T]\times D,\\[1.5mm]
		&\mu{\rm d}{\bf H}=-{\rm \bf curl}~{\bf E}{\rm d}t+\lambda{\bf E}\circ{\rm d}W,~(t,{\bf x})\in(0,~T]\times D,
	\end{split}
\end{equation}
where $\circ$ denotes the Stratonovich integral, $D \subset \mathbb{R}^{3}$ is a bounded domain, $T \in (0, \infty)$, $\mathbf{E} = (E_1, E_2, E_3)^{\top}$ and $\mathbf{H} = (H_1, H_2, H_3)^{\top}$ are the electric and magnetic fields, and constants $\varepsilon$ and $\mu$ represent the electric permittivity and magnetic permeability, respectively. Here, $W(t)$ is a $Q$-Wiener process with respect to a filtered probability space $(\Omega, \mathcal{F}, \{\mathcal{F}_{t}\}_{0 \leq t \leq T}, \mathbb{P})$, where $Q$ is a symmetric and positive definite operator with finite trace on $U = L^{2}(D)$.  
Let $\{e_i\}_{i \in \mathbb{N}}$ denote an orthonormal basis of $U$. Then $W(t)$ can be represented as  
\begin{equation}\label{W}  
W(t) = \sum_{i=1}^{\infty} Q^{\frac{1}{2}} e_i \beta_i(t), \quad t \in [0, T],  
\end{equation}  
where $\{\beta_i(t)\}_{i \in \mathbb{N}}$ are independent real-valued Brownian motions.

Three-dimensional stochastic Maxwell equations with multiplicative noise play a critical role in many scientific fields \cite{CC1,LL1,MM1,SS1}. To date, scholars have conducted studies on the existence and uniqueness of solutions to the three-dimensional stochastic Maxwell equations, as well as certain properties of these solutions in the literature. Due to the equivalence between the Stratonovich and It\^o integrals and by \cite[Theorem 2.1]{CC2}, the stochastic Maxwell Eq. \eqref{sto_maxwell_equations} admits a unique mild solution, as also discussed in, for example, \cite{CC3,ZZ1}. Furthermore, \cite{JJ1} proved that the system energy of three-dimensional stochastic Maxwell equations with multiplicative noise is almost surely conserved.

However, since obtaining exact solutions for three-dimensional stochastic Maxwell equations is quite difficult, studying their numerical methods plays a crucial role in practical applications. \cite{CC2} constructed a semi-implicit Euler method for stochastic Maxwell equations with It\^o multiplicative noise, which exhibits a mean-square convergence order of $\frac{1}{2}$. Using regularity estimates of mild solutions, \cite{DD1} proved that the strong convergence order remains $\frac{1}{2}$ for general multiplicative noise. \cite{CC3} introduced a class of generalized stochastic Runge-Kutta methods for stochastic Maxwell equations with additive noise. Nevertheless, preserving the energy conservation property in numerical methods remains particularly essential. Currently, numerous studies have developed energy-conserving numerical methods for deterministic Maxwell equations, with various approaches proposed \cite{JJ2,WW1,WW2}. However, extending these deterministic methods to stochastic scenarios faces significant challenges: stochastic oscillations amplify numerical dissipation, destroying the system's energy conservation. To address this limitation, \cite{JJ1} proposed a stochastic energy-conserving method for additive noise cases, combining spatial wavelet collocation with temporal stochastic symplectic methods.

However, it has not yet been proven that stochastic splitting methods for Eq. \eqref{sto_maxwell_equations} preserve the energy conservation property. In this paper, we use operator splitting techniques to decouple Eq. \eqref{sto_maxwell_equations} into simple one-dimensional subsystems and construct two stochastic splitting methods, Splitting I and Splitting II, through a combination of spatial compact difference methods and the midpoint rule in time discretization for the deterministic parts, and exact unitary analytical solutions for the stochastic parts. Throughout the process, we strictly ensure that each substep maintains discrete energy invariance, ultimately guaranteeing unconditional strict conservation of electromagnetic energy by the entire algorithm.

The structure of this paper is as follows: Section \ref{preliminaries} presents theoretical results, including the well-posedness and energy conservation laws for Eq. \eqref{sto_maxwell_equations}. Section \ref{sect_3} details two operator splitting methods: Splitting method I, based on positive/negative decomposition of the curl operator, and Splitting method II, based on coordinate-direction splitting. This section decomposes the deterministic and stochastic subsystems and demonstrates the unitary group properties of the sub-operators. Section \ref{Energy conservation splitting  methods} constructs two splitting methods and proves their discrete energy conservation: Subsection \ref{Setting} establishes the notation used throughout this paper, and Subsection \ref{Splitting methods} proposes the corresponding numerical methods. These methods employ spatial compact difference discretizations combined with the temporal midpoint rule for deterministic subsystems and exact analytical solutions for stochastic subsystems. Section \ref{Conservation properties} rigorously proves the discrete energy conservation for both methods. Finally, Section \ref{Numerical experiments2} conducts numerical experiments to verify the theoretical results.

\section{Mathematical preliminaries}\label{preliminaries}
In this section, we lay out the mathematical framework for the article and summarize key results for stochastic Maxwell equations from the literature, including the existence and uniqueness of the mild solution and the energy conservation law.

The basic Hilbert space we work with is ${\mathbb H} = L^2(D)^3 \times L^2(D)^3$ with inner product defined by
\[
\left\langle \begin{pmatrix}
	{\bf E}_1\\{\bf H}_1
\end{pmatrix},~ \begin{pmatrix}
	{\bf E}_2\\{\bf H}_2
\end{pmatrix}\right\rangle_{\mathbb H}:=\int_{D}(\varepsilon {\bf E}_1\cdot {\bf E}_2
+\mu{\bf H}_1\cdot{\bf H}_2){\rm d}{\bf x}
\]
and the norm
\[
\left\|\begin{pmatrix}
	{\bf E}\\{\bf H}
\end{pmatrix}\right\|_{\mathbb H}=\left(\int_{D}(\varepsilon|{\bf E}|^2+\mu|{\bf H}|^2){\rm d}{\bf x}\right)^{\frac{1}{2}},\quad \forall~{\bf E},{\bf H}\in L^2(D)^3.
\]
We define the Maxwell operator by
\begin{equation}\label{M_operator}
	M=\begin{pmatrix}
		0& {\rm \bf curl} \\
		-{\rm \bf curl} &0 \\
	\end{pmatrix}
\end{equation}
with domain
\begin{equation*}
	\begin{split}
		{\mathcal D}(M)&=\left\{\begin{pmatrix}
			{\bf E} \\
			{\bf H}
		\end{pmatrix}\in {\mathbb H}:~M\begin{pmatrix}
			{\bf E} \\
			{\bf H}
		\end{pmatrix}=\begin{pmatrix}
			{\rm\bf curl}~{\bf H}\\
			-{\rm\bf curl}~{\bf E}
		\end{pmatrix}\in{\mathbb H},~ {\bf n}\times{\bf E}\Big|_{\partial D}=0 \right\}\\[1.5mm]
		&=H_0({\rm\bf curl},D)\times H({\rm\bf curl},D).
	\end{split}
\end{equation*}
Let $B:[0,T]\times \mathbb{H}\to HS(U_0,{\mathbb H})$ be a Nemytskij operator defined by
\begin{equation}\label{B}
	(B(t,u)v)({\bf x})=\begin{pmatrix}
		-\lambda {\bf H}(t,{\bf x}))v({\bf x})\\
		\lambda{\bf E}(t,{\bf x}))v({\bf x})
	\end{pmatrix},\quad {\bf x}\in D,
\end{equation}
where $u=({\bf E}^T,{\bf H}^T)^T\in{\mathbb H}$, and $v\in U_0:=Q^{\frac12}U$. To ensure the existence of a unique mild solution for
Eq. \eqref{sto_maxwell_equations} over the interval $[0,T]$, it suffices to suppose that the coefficients $\varepsilon$ and $\mu$ satisfy
\begin{equation}\label{let_coeff}
	\varepsilon,~\mu\in L^{\infty}(D),~~\varepsilon,~\mu\geq\delta>0
\end{equation}
for a positive constant $\delta>0$.

Using these notations, we obtain the abstract form of stochastic Maxwell equations
in infinite-dimensional space $\mathbb{H}$:
\begin{equation}\label{sto_maxwell_equations_abstract}
	\begin{split}
		A{\rm d}Z(t)&=MZ(t){\rm d}t+B(t,Z(t))\circ{\rm d}W(t),\quad t\in(0,~T],\\
		Z(0)&=Z_0,
	\end{split}
\end{equation}
where $Z=({\bf E}^{\top},{\bf H}^{\top})^{\top}$, $Z_0=({\bf E}_0^{\top},{\bf H}_0^{\top})^{\top}$ and $A=\left(
\begin{array}{cc}
	\varepsilon I & 0 \\
	0 & \mu I \\
\end{array}
\right)$.

Due to the equivalence between Stratonovich integral and It\^o integral and follows from \cite[Theorem 2.1]{CC2}, stochastic Maxwell Eq. \eqref{sto_maxwell_equations_abstract} have a unique mild solution, which has been also discussed, for example, in \cite{CC3,ZZ1}.

We begin with the following invariant of the electromagnetic energy functional
\begin{equation}\label{energy_con}
	\mathcal{H}(Z(t)):=\int_{D}\left(\varepsilon|{\bf E}(t,{\bf x})|^2+\mu|{\bf H}(t,{\bf x})|^{2}\right){\rm d}{\bf x}.
\end{equation}
\begin{proposition}
	Under the perfectly electric conducting boundary condition ${\bf n}\times{\bf E}=0$ on $\partial D$, we have
	\begin{equation}
		\mathcal{H}(Z(t))=\mathcal{H}(Z_0),\quad{\mathbb P}\text{-}a.s.,
	\end{equation}
	for any $t\in[0, T]$.
\end{proposition}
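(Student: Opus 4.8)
The plan is to differentiate the energy $\mathcal{H}(Z(t)) = \langle Z(t), Z(t)\rangle_{\mathbb H}$ along the flow of \eqref{sto_maxwell_equations_abstract}, exploiting the fact that the Stratonovich integral obeys the ordinary (Leibniz) chain rule, so that no It\^o-type correction term appears. Formally,
\begin{equation*}
	{\rm d}\,\mathcal{H}(Z(t)) = 2\langle Z(t),{\rm d}Z(t)\rangle_{\mathbb H},
\end{equation*}
and after substituting ${\rm d}Z = A^{-1}\!\left(MZ\,{\rm d}t + B(t,Z)\circ{\rm d}W\right)$ I would split the right-hand side into a drift part and a Stratonovich-diffusion part and treat each separately. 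Writing the inner product componentwise, the weights $\varepsilon,\mu$ carried by $A$ cancel against those in $\langle\cdot,\cdot\rangle_{\mathbb H}$, so that $\langle Z, A^{-1}\eta\rangle_{\mathbb H}=\int_D(\mathbf E\cdot\eta_{\mathbf E}+\mathbf H\cdot\eta_{\mathbf H})\,{\rm d}\mathbf x$, which makes both pieces transparent.

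First I would dispose of the stochastic term. With $\eta=B(t,Z)v=(-\lambda\mathbf H v,\lambda\mathbf E v)^{\top}$, the diffusion contribution reduces to $\int_D\lambda\,(-\mathbf E\cdot\mathbf H+\mathbf H\cdot\mathbf E)\,{\rm d}\mathbf x\circ{\rm d}W$, which vanishes pointwise because $\mathbf E\cdot\mathbf H=\mathbf H\cdot\mathbf E$. This is precisely the statement that $A^{-1}B(t,\cdot)$ is skew with respect to $\langle\cdot,\cdot\rangle_{\mathbb H}$, which is the structural reason the multiplicative noise neither injects nor dissipates energy.

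Next I would handle the drift. The same cancellation reduces it to $\int_D\left(\mathbf E\cdot{\rm curl}\,\mathbf H-\mathbf H\cdot{\rm curl}\,\mathbf E\right){\rm d}\mathbf x$, and by the identity ${\rm div}(\mathbf E\times\mathbf H)=\mathbf H\cdot{\rm curl}\,\mathbf E-\mathbf E\cdot{\rm curl}\,\mathbf H$ together with the divergence theorem this equals $-\int_{\partial D}(\mathbf E\times\mathbf H)\cdot\mathbf n\,{\rm d}S$. Using the scalar triple product $(\mathbf E\times\mathbf H)\cdot\mathbf n=(\mathbf n\times\mathbf E)\cdot\mathbf H$ and the perfectly electric conducting condition $\mathbf n\times\mathbf E=0$ on $\partial D$, the boundary integral vanishes; this is exactly the skew-adjointness of $M$ on $\mathcal D(M)$. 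Hence ${\rm d}\,\mathcal H(Z(t))=0$, and integrating from $0$ to $t$ yields $\mathcal H(Z(t))=\mathcal H(Z_0)$ almost surely.

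The main obstacle is rigor rather than algebra: the componentwise chain-rule computation is formal because a mild solution need not lie in $\mathcal D(M)$, so the integration by parts is not immediately licensed. To make it precise I would either assume the data are regular enough that $Z(t)\in\mathcal D(M)$ (a strong solution), or approximate via a Galerkin/Yosida scheme in which $M$ is replaced by bounded skew-adjoint operators and $B$ is truncated, establish exact conservation at each level where the chain rule applies, and pass to the limit. Equivalently, one can argue abstractly that $A^{-1}M$ generates a unitary $C_0$-group on $\mathbb H$ while $A^{-1}B(t,\cdot)$ is pointwise skew, so the Stratonovich flow is norm-preserving; the boundary term is the only place where the PEC condition enters, and it is what upgrades $M$ from merely skew-symmetric to genuinely skew-adjoint.
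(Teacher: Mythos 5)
Your argument is correct in substance, but there is an important structural point: the paper does not actually prove this proposition at all --- it simply refers the reader to Theorem 3.3 of \cite{ZZ1} for ``a similar proof'' --- so your write-up supplies the argument that the paper leaves as a citation. What you do is precisely the mechanism that such proofs in the literature rely on, and your algebra checks out: the Stratonovich calculus gives ${\rm d}\mathcal H(Z(t))=2\langle Z,{\rm d}Z\rangle_{\mathbb H}$ with no It\^o correction; the diffusion contribution vanishes pointwise since it reduces to $\lambda\int_D v\,(-\mathbf E\cdot\mathbf H+\mathbf H\cdot\mathbf E)\,{\rm d}\mathbf x=0$; and the drift reduces, via ${\rm div}(\mathbf E\times\mathbf H)=\mathbf H\cdot{\rm \bf curl}\,\mathbf E-\mathbf E\cdot{\rm \bf curl}\,\mathbf H$ and $(\mathbf E\times\mathbf H)\cdot\mathbf n=(\mathbf n\times\mathbf E)\cdot\mathbf H$, to a boundary integral killed by the PEC condition --- this is the same skew-adjointness mechanism the paper isolates for the split operators in Lemma \ref{operator_lemma} and exploits discretely in Theorems \ref{TT1} and \ref{TT2}. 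You also correctly identify the one genuine technical obstruction, namely that a mild solution need not take values in $\mathcal D(M)$, so the integration by parts is formal; the remedies you name (a strong-solution regularity assumption, or Galerkin/Yosida approximation with bounded skew-adjoint generators followed by a passage to the limit) are the standard ways this is closed, though be aware that your proposal sketches rather than executes that limiting argument, so as written it is a correct and well-organized proof outline rather than a complete rigorous proof.
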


We refer the reader to Theorem 3.3 in \cite{ZZ1} for a similar proof of the proposition.
\section{Two operator splitting methods}\label{sect_3}
	To address the memory difficulties associated with conventional numerical methods for the three-dimensional stochastic Maxwell equations, we apply a local one-dimensional idea to propose two operator splitting methods for the stochastic Maxwell equations \eqref{sto_maxwell_equations}. In this section, we also present several properties of the proposed methods.
			
			We note that the ${\rm\bf curl}$ operator can be split into 
			\begin{equation}
				{\rm\bf curl}={\rm\bf curl_+}+{\rm\bf curl_{-}},
			\end{equation}
			with 
			\begin{equation*}
				{\rm\bf curl_+}=\left[
				\begin{array}{ccc}
					0 & 0 & \partial_y \\
					\partial_z & 0 & 0 \\
					0 & \partial_x & 0 \\
				\end{array}
				\right],\quad {\rm\bf curl_{-}}=\left[
				\begin{array}{ccc}
					0 & -\partial_z & 0 \\
					0 & 0 & -\partial_x \\
					-\partial_y & 0 & 0 \\
				\end{array}
				\right].
			\end{equation*}
			and 
			\begin{equation}
				{\rm\bf curl}={\rm\bf curl_x}+{\rm\bf curl_{y}}+{\rm\bf curl_{z}},
			\end{equation}
			with 
			\begin{equation*}
				{\rm\bf curl_x}=\left[
				\begin{array}{ccc}
					0 & 0 & 0 \\
					0 & 0 & -\partial_x \\
					0 & \partial_x & 0 \\
				\end{array}
				\right], {\rm\bf curl_{y}}=\left[
				\begin{array}{ccc}
					0 & 0 & \partial_y \\
					0 & 0 & 0 \\
					-\partial_y & 0 & 0 \\
				\end{array}
				\right],{\rm\bf curl_{z}}=\left[
				\begin{array}{ccc}
					0 & -\partial_z & 0 \\
					\partial_z & 0 & 0 \\
					0 & 0 & 0 \\
				\end{array}
				\right].
			\end{equation*}
			respectively.
			
			Define operators
			\begin{equation}\label{M_1}
				M_{+}=\left[
				\begin{array}{cc}
					{\bf 0} & {\rm\bf curl_{+}} \\
					-{\rm\bf curl_{+}} & {\bf 0} \\
				\end{array}
				\right],\quad M_{-}=\left[
				\begin{array}{cc}
					{\bf 0} & {\rm\bf curl_{-}} \\
					-{\rm\bf curl_{-}} & {\bf 0} \\
				\end{array}
				\right],
			\end{equation}
			in $\mathbb{H}$ and endowed with domains
			\begin{equation*}
				\begin{split}
					{\mathcal D}(M_+)=\left\{\begin{pmatrix}
						{\bf E} \\
						{\bf H}
					\end{pmatrix}\in {\mathbb H}:~M_+\begin{pmatrix}
						{\bf E} \\
						{\bf H}
					\end{pmatrix}\in{\mathbb H},~ {\bf n}\times{\bf E}=0~{\texttt on}~\partial D \right\},\\
					{\mathcal D}(M_-)=\left\{\begin{pmatrix}
						{\bf E} \\
						{\bf H}
					\end{pmatrix}\in {\mathbb H}:~M_-\begin{pmatrix}
						{\bf E} \\
						{\bf H}
					\end{pmatrix}\in{\mathbb H},~ {\bf n}\times{\bf E}=0~{\texttt on}~\partial D \right\}.
				\end{split}
			\end{equation*}
			It can be verified that ${\mathcal D}(M_+)\cap{\mathcal D}(M_-)\subset{\mathcal D}(M)$ and $M_+u+M_-u=Mu$ for $u\in{\mathcal D}(M_+)\cap{\mathcal D}(M_-)$. In a similar manner, we define operators
			\begin{equation}\label{M_2}
				M_{\alpha}=\left[
				\begin{array}{cc}
					{\bf 0} & {\rm\bf curl_{\alpha}} \\
					-{\rm\bf curl_{\alpha}} & {\bf 0} \\
				\end{array}
				\right],\quad \alpha=x,y,z,
			\end{equation}
			in $\mathbb{H}$ and endowed with domains
			\begin{equation*}
				\begin{split}
					{\mathcal D}(M_x)=\left\{\begin{pmatrix}
						{\bf E} \\
						{\bf H}
					\end{pmatrix}\in {\mathbb H}:~M_x\begin{pmatrix}
						{\bf E} \\
						{\bf H}
					\end{pmatrix}\in{\mathbb H},~E_2=E_3=0~{\texttt on}~\Gamma_1^{\pm} \right\},\\
					{\mathcal D}(M_y)=\left\{\begin{pmatrix}
						{\bf E} \\
						{\bf H}
					\end{pmatrix}\in {\mathbb H}:~M_y\begin{pmatrix}
						{\bf E} \\
						{\bf H}
					\end{pmatrix}\in{\mathbb H},~E_1=E_3=0~{\texttt on}~\Gamma_2^{\pm}  \right\},\\
					{\mathcal D}(M_z)=\left\{\begin{pmatrix}
						{\bf E} \\
						{\bf H}
					\end{pmatrix}\in {\mathbb H}:~M_z\begin{pmatrix}
						{\bf E} \\
						{\bf H}
					\end{pmatrix}\in{\mathbb H},~E_1=E_2=0~{\texttt on}~\Gamma_3^{\pm}  \right\},
				\end{split}
			\end{equation*}
			where $\Gamma_1^{\pm}\cup\Gamma_2^{\pm}\cup\Gamma_3^{\pm}=\partial D$. We also have ${\mathcal D}(M_x)\cap{\mathcal D}(M_y)\cap{\mathcal D}(M_z)\subset{\mathcal D}(M)$ and $M_xu+M_yu+M_zu=Mu$ for $u\in{\mathcal D}(M_x)\cap{\mathcal D}(M_y)\cap{\mathcal D}(M_z)$.
			\begin{lemma}\label{operator_lemma}
				The operators $M_+,M_-$ defined in \eqref{M_1} and $M_\alpha$ $(\alpha=x,y,z)$ defined in \eqref{M_2} are skew-adjoint on $\mathbb{H}$.
			\end{lemma}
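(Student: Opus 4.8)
The plan is to reduce skew-adjointness of each $M_\bullet\in\{M_+,M_-,M_x,M_y,M_z\}$ to two separate claims and verify them by a single scheme, since all five share the Maxwell block form built from one first-order partial-curl block. The first claim is skew-symmetry on the stated domain, $\langle M_\bullet u,v\rangle_{\mathbb H}=-\langle u,M_\bullet v\rangle_{\mathbb H}$; the second is the domain equality $\mathcal D(M_\bullet^{*})=\mathcal D(M_\bullet)$, which is what upgrades a skew-symmetric operator to a genuinely skew-adjoint one. This second step is not cosmetic: it is precisely skew-adjointness, not mere skew-symmetry, that Stone's theorem needs to produce the unitary sub-flows referred to in this section.

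For skew-symmetry I would expand $\langle M_\bullet u,v\rangle_{\mathbb H}$ into a sum of scalar integrals $\int_D(\partial_\alpha\phi)\psi\,\mathrm d\mathbf x$ and integrate each one by parts, equivalently apply the divergence identity $\mathrm{div}(\mathbf E\times\mathbf H)=\mathbf H\cdot\mathbf{curl}\,\mathbf E-\mathbf E\cdot\mathbf{curl}\,\mathbf H$ restricted to each block. The interior contributions reorganize into $-\langle u,M_\bullet v\rangle_{\mathbb H}$: algebraically this is the statement that the one-directional blocks $\mathbf{curl}_x,\mathbf{curl}_y,\mathbf{curl}_z$ are formally self-adjoint while $\mathbf{curl}_+$ and $\mathbf{curl}_-$ are formal adjoints of one another, the off-diagonal $2\times2$ structure converting this into the required sign flip (the $\varepsilon,\mu$ weights ride along as symmetric multipliers of the undifferentiated factors and must simply be tracked). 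The remaining surface integrals over $\partial D$ are then shown to vanish using exactly the boundary conditions written into each domain: $\mathbf n\times\mathbf E=0$ on $\partial D$ for $M_\pm$, and the partitioned tangential conditions on $\Gamma_1^\pm,\Gamma_2^\pm,\Gamma_3^\pm$ for $M_x,M_y,M_z$. Face by face one checks that the surviving boundary terms involve only those components of $\mathbf E$ that are set to zero there.

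The main obstacle is the domain equality, i.e. the nontrivial inclusion $\mathcal D(M_\bullet^{*})\subseteq\mathcal D(M_\bullet)$. I would establish it by a deficiency/range argument: show that $\mathrm{Ran}(I\pm M_\bullet)$ is dense in $\mathbb H$, so that the closed skew-symmetric operator admits no proper skew-adjoint extension; equivalently, take $v\in\mathcal D(M_\bullet^{*})$, read off from $\langle M_\bullet u,v\rangle_{\mathbb H}=\langle u,-M_\bullet^{*}v\rangle_{\mathbb H}$ that the relevant distributional partial curl of $v$ lies in $\mathbb H$, and then recover the boundary condition for $v$ from the boundary integral that resurfaces once the test fields $u$ are permitted a nonzero trace. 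The delicate point is the trace theory for the graph spaces of the individual partial-curl blocks---anisotropic analogues of $H(\mathbf{curl},D)$ that control only one directional derivative---together with the compatibility of these traces with the face partition $\partial D=\Gamma_1^\pm\cup\Gamma_2^\pm\cup\Gamma_3^\pm$. I expect this boundary/trace bookkeeping, rather than the interior skew-symmetry computation, to be where essentially all the work lies.
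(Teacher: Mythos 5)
Your overall reduction coincides with the paper's: the paper likewise splits the claim into skew-symmetry (integration by parts, with the boundary terms killed by the conditions written into each domain) plus density of ${\rm ran}(\mathrm{Id}\pm M_\bullet)$, which is exactly your deficiency/range criterion. Your interior algebra is sound, and in one respect sharper than the text: since ${\rm\bf curl}_+$ and ${\rm\bf curl}_-$ are formal adjoints of one another rather than formally self-adjoint, the skew-symmetric block operators are the ones coupling ${\rm\bf curl}_+$ above the diagonal with $-{\rm\bf curl}_-$ below it (this is what Subsystems 1 and 2 of \textbf{Splitting I} actually integrate); read literally, \eqref{M_1} with $-{\rm\bf curl}_+$ in the lower-left block is not skew-symmetric, as one sees by taking constant $\varepsilon$, ${\bf E}=(0,0,u_3)^\top$ with $u_3$ smooth and compactly supported, and ${\bf H}=(0,\partial_x u_3,0)^\top$, for which $\langle M_+\psi,\psi\rangle_{\mathbb H}=-\varepsilon\int_D(\partial_x u_3)^2\,{\rm d}{\bf x}\neq 0$. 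So your pairing is the right reading of the lemma.

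The genuine gap is that the second step --- the one you yourself call the main obstacle --- is never carried out. You state two equivalent criteria (dense range of $\mathrm{Id}\pm M_\bullet$, or $\mathcal D(M_\bullet^{*})\subseteq\mathcal D(M_\bullet)$ recovered through boundary traces), but you give an argument for neither, and you predict that the proof will hinge on trace theory for anisotropic graph spaces controlling a single directional derivative. That machinery is both absent from your proposal and unnecessary. The paper's resolution of this step is elementary and constructive, and it is the real content of its proof: for smooth $f=(\varphi^\top,\psi^\top)^\top$, solve $(\mathrm{Id}\pm M_x)g=f$ by eliminating the magnetic unknowns, which decouples the system into two scalar one-dimensional elliptic problems $E_j-\partial_{xx}E_j=\hat f_j$, $j=2,3$, with $\hat f_2=\varphi_2\pm\partial_x\psi_3$ and $\hat f_3=\varphi_3\mp\partial_x\psi_2$; solve these by Lax--Milgram for $D_x=\partial_{xx}$ with a Dirichlet condition imposed only on $\Gamma_1^{\pm}$; then recover $H_2=\psi_2\pm\partial_x E_3$ and $H_3=\psi_3\mp\partial_x E_2$ algebraically, so that $g\in\mathcal D(M_x)$. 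Density of smooth fields gives dense range, and skew-symmetry gives $\|(\mathrm{Id}\pm M_x)g\|_{\mathbb H}^2=\|g\|_{\mathbb H}^2+\|M_xg\|_{\mathbb H}^2$, so the range is also closed, whence ${\rm ran}(\mathrm{Id}\pm M_x)=\mathbb H$ and $M_x^{*}=-M_x$. Without this construction (or a completed trace argument in its place), your proposal establishes only skew-symmetry, not skew-adjointness.
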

			\begin{proof}
				We only present the proof for $M_\alpha$ $(\alpha=x,y,z)$ here, since the proof for $M_+,M_-$ is similar.

                To prove $M_\alpha^{\ast} = -M_\alpha$, it suffices to show that $M_\alpha$ is skew-symmetric and that $\mathrm{Id}\pm M_\alpha$ has dense range. To establish the skew-symmetry of $M_x$, take $\psi = (u, v)$ and $\tilde{\psi} = (\tilde{u}, \tilde{v})$ in $\mathcal{D}(M_x)$. The integration by parts formula then yields:
				\begin{align*}
					\left\langle M_x\psi, \tilde{\psi}\right\rangle_\mathbb{H}
					&=\int_{D} \left(-\tilde{u}_2\partial_x v_3+\tilde{u}_3\partial_x v_2 +\tilde{v}_2\partial_x u_3-\tilde{v}_3\partial_x u_2 \right){\rm d}{\bf x}\\[1.5mm]
					&=\int_{D} \left(v_3 \partial_x\tilde{u}_2-v_2\partial_x\tilde{u}_3-u_3\partial_x\tilde{v}_2+u_2 \partial_x\tilde{v}_3\right){\rm d}{\bf x}\\[1.5mm]
					&=-\left\langle \psi,  M_x\tilde{\psi}\right\rangle_\mathbb{H},
				\end{align*}
				due to the zero boundary conditions by the definition of ${\mathcal D}(M_x)$. Thus $M_x$ is skew-symmetric and analogously for $M_y,M_z$.
				
			To establish the density of $\mathrm{Id}\pm M_x$, it suffices to demonstrate that
				\begin{equation}\label{1.42}
					\overline{{\rm ran}(Id\pm M_x)}=\mathbb{H}.
				\end{equation}
				Because $C^{\infty}(D)$ is dense in $\mathbb{H}$, we infer that \eqref{1.42} is equivalent to showing that for every $f = (\varphi^\top, \psi^\top)^\top \in C^{\infty}(D)$, there exists $g = (\mathbf{E}^\top, \mathbf{H}^\top)^\top \in \mathcal{D}(M_x)$ such that
				\begin{equation}\label{eq 4.1}
					(Id\pm M_x)g=f,
				\end{equation}
				or equivalently,
				\begin{equation*}
					\begin{split}
						&E_1=\varphi_1,\quad E_2 \mp \partial_x H_3 =\varphi_2,\quad E_3\pm \partial_x H_2=\varphi_3,\\[1.5mm]
						&H_1=\psi_1,\quad H_2\mp \partial_x E_3=\psi_2,\quad H_3\pm \partial_x E_2=\psi_3.
					\end{split}
				\end{equation*}
				It yields
				\begin{align*}
					E_2-\partial_{xx} E_2=\varphi_2\pm \partial_x \psi_3=:\hat{f}_2\in L^2(D),\\[1.5mm]
					E_3 -\partial_{xx} E_3=\varphi_3\mp \partial_x\psi_2=:\hat{f}_3\in L^2(D).
				\end{align*}
				In order to solve these equations, we introduce the operator $D_x=\partial_{xx}$ with domain
				\[
				{\mathcal D}(D_x)=\{u\in L^2(D):~~\partial_x u,~D_x u\in L^2(D),~~u=0 \mbox{ on } \Gamma_1^{\pm} \}.
				\]
				The Lax-Milgram lemma thus yields the existence of $E_2$, $E_3\in {\mathcal D}(D_x)$ such that $E_j-D_x E_j=\hat{f}_j, j=2,3$ holds. If we now define
				\[
				H_2=\psi_2\pm\partial_x E_3,\qquad H_3=\psi_3\mp\partial_x E_2,
				\]
				we obtain a solution $g=({\bf E}^T,{\bf H}^T)^T\in {\mathcal D}(M_x)$ of \eqref{eq 4.1}, as asserted. Similarly, we can get the results for $M_y$ and $M_z$.
			\end{proof}
			
			Now we are in the position to present the operator splitting of stochastic Maxwell equations. We decompose \eqref{sto_maxwell_equations_abstract} into two deterministic subsystems
			\begin{equation}\label{17}
				\begin{split}
					A{\rm d}Z(t)=M_+Z(t){\rm d}t,\quad A{\rm d}Z(t)=M_-Z(t){\rm d}t,
				\end{split}
			\end{equation}
			and a stochastic system
			\begin{align}\label{sto_system}
				A{\rm d}Z(t)=B(t,Z(t))\circ{\rm d}W(t).
			\end{align}
			Similarly, we can decompose \eqref{sto_maxwell_equations_abstract} into the following three deterministic subsystems
			\begin{equation}\label{19}
				\begin{split}
					A{\rm d}Z(t)=M_xZ(t){\rm d}t,\quad A{\rm d}Z(t)=M_yZ(t){\rm d}t,\quad A{\rm d}Z(t)=M_zZ(t){\rm d}t
				\end{split}
			\end{equation}
			and a stochastic system \eqref{sto_system}.
			\begin{remark}
				Under condition \eqref{let_coeff}, and based on Lemma \ref{operator_lemma} and by applying Stone's theorem (see, for instance, \cite[Theorem II.3.24]{EE1}), the operators $M_+$, $M_-$, $M_x$, $M_y$ and $M_z$ generate unitary $C_0$-groups $S_+(t)=e^{tM_+}$, $S_-(t)=e^{tM_-}$, $S_x(t)=e^{tM_x}$, $S_y(t)=e^{tM_y}$ and $S_z(t)=e^{tM_z}$ on $\mathbb{H}$ for $t\in\mathbb{R}$, respectively. Thus, the above two subsystems are both well-posedness.
			\end{remark}
			
			To show more clearly, we give the corresponding explicit componentwise forms of the above two splitting methods.
			\begin{itemize}
				\item[(1)] {\bf Splitting I.}
				\begin{itemize}
					\item Subsystem 1: 
					\begin{equation*}\label{20}
						\left\{\begin{array}{c}
							\varepsilon{\rm d}E_3=\partial_xH_2{\rm d}t\\
							\mu{\rm d}H_2=\partial_xE_3{\rm d}t
						\end{array}\right.,~\left\{\begin{array}{c}
							\varepsilon{\rm d}E_1=\partial_yH_3{\rm d}t\\
							\mu{\rm d}H_3=\partial_yE_1{\rm d}t
						\end{array}\right.,~{\rm and}~ \left\{\begin{array}{c}
							\varepsilon{\rm d}E_2=\partial_zH_1{\rm d}t\\
							\mu{\rm d}H_1=\partial_zE_2{\rm d}t
						\end{array}\right..
					\end{equation*}
					\item Subsystem 2:
					\begin{equation*}\label{21}
						\left\{\begin{array}{c}
							\varepsilon{\rm d}E_2=-\partial_xH_3{\rm d}t\\
							\mu{\rm d}H_3=-\partial_xE_2{\rm d}t
						\end{array}\right.,~\left\{\begin{array}{c}
							\varepsilon{\rm d}E_3=-\partial_yH_1{\rm d}t\\
							\mu{\rm d}H_1=-\partial_yE_3{\rm d}t
						\end{array}\right.,~{\rm and}~ \left\{\begin{array}{c}
							\varepsilon{\rm d}E_1=-\partial_zH_2{\rm d}t\\
							\mu{\rm d}H_2=-\partial_zE_1{\rm d}t
						\end{array}\right..
					\end{equation*}
					\item Subsystem 3:
					\begin{equation*}
						\varepsilon{\rm d}{\bf E}=-\lambda{\bf H}\circ{\rm d}W,\quad\mu{\rm d}{\bf H}=\lambda{\bf E}\circ{\rm d}W.
					\end{equation*}
				\end{itemize}     
			\end{itemize}
			\begin{itemize}
				\item[(2)] {\bf Splitting II.}
				\begin{itemize}
					\item Subsystem 1: 
					\begin{equation*}\label{3}
						\left\{\begin{array}{c}
							{\rm d}E_x=0\\
							{\rm d}H_x=0
						\end{array}\right.,~\left\{\begin{array}{c}
							\varepsilon{\rm d}E_y=-\partial_xH_z{\rm d}t\\
							\mu{\rm d}H_z=-\partial_xE_y{\rm d}t
						\end{array}\right., ~{\rm and}~ \left\{\begin{array}{c}
							\varepsilon{\rm d}E_z=\partial_xH_y{\rm d}t\\
							\mu{\rm d}H_y=\partial_xE_z{\rm d}t
						\end{array}\right..
					\end{equation*}
					\item Subsystem 2:
					\begin{equation*}\label{4}
						\left\{\begin{array}{c}
							{\rm d}E_y=0\\
							{\rm d}H_y=0
						\end{array}\right.,~\left\{\begin{array}{c}
							\varepsilon{\rm d}E_z=-\partial_yH_x{\rm d}t\\
							\mu{\rm d}H_x=-\partial_yE_z{\rm d}t
						\end{array}\right., ~{\rm and}~ \left\{\begin{array}{c}
							\varepsilon{\rm d}E_x=\partial_yH_z{\rm d}t\\
							\mu{\rm d}H_z=\partial_yE_x{\rm d}t
						\end{array}\right..
					\end{equation*}
					\item Subsystem 3: 
					\begin{equation*}\label{5}
						\left\{\begin{array}{c}
							{\rm d}E_z=0\\
							{\rm d}H_z=0
						\end{array}\right.,~\left\{\begin{array}{c}
							\varepsilon{\rm d}E_x=-\partial_zH_y{\rm d}t\\
							\mu{\rm d}H_y=-\partial_zE_x{\rm d}t
						\end{array}\right., ~{\rm and}~ \left\{\begin{array}{c}
							\varepsilon{\rm d}E_y=\partial_zH_x{\rm d}t\\
							\mu{\rm d}H_x=\partial_zE_y{\rm d}t
						\end{array}\right..
					\end{equation*}
					\item Subsystem 4: 
					\begin{equation*}
						\varepsilon{\rm d}{\bf E}=-\lambda{\bf H}\circ{\rm d}W,\quad\mu{\rm d}{\bf H}=\lambda{\bf E}\circ{\rm d}W.
					\end{equation*}
				\end{itemize}
			\end{itemize}
			
			\begin{remark}
				With the perfectly electric conducting boundary conditions imposed and by Lemma \ref{operator_lemma}, the energy conservation law still hold for {\bf Splitting I} (resp. {\bf Splitting II}).
			\end{remark}
			
			\section{Energy conservation splitting  methods}\label{Energy conservation splitting  methods}
            Drawing on the operator splitting techniques developed in Subsection \ref{sect_3}, we propose two distinct energy-conserving splitting methods for three-dimensional stochastic Maxwell equations with multiplicative noise. These methods combine a spatial compact difference method and temporal midpoint rule for \textbf{Splitting I} (resp. \textbf{Splitting II}) with exact analytical solutions of \eqref{sto_system}.
			\subsection{Setting}\label{Setting}
			In what follows, we consider a cubic domain $D=[x_L,x_R]\times[y_L,y_R]\times [z_L,z_R]$ in this paper. Let $h_x$, $h_y$ and $h_z$ be the mesh sizes along $x$, $y$ and $z$ directions, respectively, and $\tau$ is the time step size. The spatial-temporal domain $D_T$ is partitioned by $x_i=x_L+ih_x,y_j=y_L+jh_y,z_k=z_L+kh_z,t^n=n\tau$, for $i=0,1,\cdots,I;~j=0,1,\cdots,J;~k=0,1,\cdots,K$ and $n=0,1,\cdots,N$. For a function $u(x,y,z,t)$, let us define the following operator
			\begin{equation*}
				\delta_tu_{i,j,k}^{n}=\frac{u_{i,j,k}^{n+1}-u_{i,j,k}^{n}}{\tau}
			\end{equation*}
			and introduce four matrices
			\begin{equation*}\label{AB}
				\begin{split}
					&{A}=\frac{1}{2}\left(\begin{array}{cccccccc}
						2&1&&&1\\
						1&2&1&&\\
						&\ddots&\ddots&\ddots&\\
						&&1&2&1\\
						1&&&1&2
					\end{array}\right),\quad{B}=\left(\begin{array}{cccccccc}
						0&1&&&-1\\
						-1&0&1&&\\
						&\ddots&\ddots&\ddots&\\
						&&-1&0&1\\
						1&&&-1&0
					\end{array}\right)
				\end{split}
			\end{equation*}
			and 
			\begin{equation*}
				\begin{split}
					&\mathcal{A}=\left(\begin{array}{cc}
						{A}&0\\
						0&{A}
					\end{array}\right),\quad\mathcal{B}=\left(\begin{array}{cccccccc}
						0&B\\
						B&0
					\end{array}\right).
				\end{split}
			\end{equation*}
			
			The following lemma is very helpful to inherit the intrinsic properties of stochastic Maxwell equations.
			\begin{lemma}\label{lemma_1}
				Matrix $\mathcal{A}$ is symmetric and positive definite, while, $\mathcal{B}$ is skew-symmetric. Moreover, $\mathcal{A}^{-1}\mathcal{B}$ is skew-symmetric.
			\end{lemma}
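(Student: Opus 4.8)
The plan is to verify the three assertions in turn, exploiting the fact that both $A$ and $B$ are circulant matrices (each row is the cyclic shift of the previous one, the corner entries encoding the periodic wrap-around), together with the block structure of $\mathcal{A}$ and $\mathcal{B}$. Symmetry and skew-symmetry will be settled by direct inspection of the generating rows and the $2\times 2$ block layout; the two genuinely substantive points are the positive definiteness of $\mathcal{A}$ and the skew-symmetry of $\mathcal{A}^{-1}\mathcal{B}$.

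First I would treat the symmetry and positive definiteness of $\mathcal{A}$. Symmetry is immediate: the generating row $(2,1,0,\dots,0,1)$ of $A$ is palindromic, so $A^\top=A$, and since $\mathcal{A}=\mathrm{diag}(A,A)$ is block diagonal, $\mathcal{A}^\top=\mathcal{A}$. For positive definiteness it suffices to treat $A$, and I would evaluate the quadratic form directly with periodic indexing,
\[
x^\top A x=\tfrac12\sum_i\big(2x_i^2+x_ix_{i-1}+x_ix_{i+1}\big)=\tfrac12\sum_i (x_i+x_{i+1})^2\ge 0,
\]
equivalently diagonalizing the circulant $A$ by the discrete Fourier basis to obtain the eigenvalues $1+\cos(2\pi k/n)$, $k=0,\dots,n-1$. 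This is where I expect the only real obstacle to lie: the form vanishes exactly on vectors with $x_{i+1}=-x_i$ for all $i$, and walking around the periodic index set forces such a vector to be zero precisely when the number of grid points $n$ is odd, whereas for even $n$ the alternating vector $x_i=(-1)^i$ lies in the kernel (the eigenvalue at $k=n/2$ being $0$). I would therefore make explicit the tacit hypothesis on the grid dimension under which strict positive definiteness actually holds.

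Next, skew-symmetry of $\mathcal{B}$ is a direct computation: the generating row $(0,1,0,\dots,0,-1)$ of $B$ is anti-palindromic, so $B^\top=-B$, and hence
\[
\mathcal{B}^\top=\begin{pmatrix}0&B^\top\\ B^\top&0\end{pmatrix}=\begin{pmatrix}0&-B\\ -B&0\end{pmatrix}=-\mathcal{B}.
\]

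Finally, for $\mathcal{A}^{-1}\mathcal{B}$ I would compute
\[
(\mathcal{A}^{-1}\mathcal{B})^\top=\mathcal{B}^\top(\mathcal{A}^\top)^{-1}=-\mathcal{B}\mathcal{A}^{-1},
\]
so that the claim $(\mathcal{A}^{-1}\mathcal{B})^\top=-\mathcal{A}^{-1}\mathcal{B}$ is equivalent to the commutation relation $\mathcal{A}\mathcal{B}=\mathcal{B}\mathcal{A}$. This is the crux of the third assertion, and it is exactly where the circulant structure is essential: any two circulant matrices of the same size commute, being simultaneously diagonalized by the Fourier matrix, so $AB=BA$ and the block computation
\[
\mathcal{A}\mathcal{B}=\begin{pmatrix}0&AB\\ AB&0\end{pmatrix}=\begin{pmatrix}0&BA\\ BA&0\end{pmatrix}=\mathcal{B}\mathcal{A}
\]
closes the argument. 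I would remark in passing that, independently of commutativity, $\mathcal{A}^{-1}\mathcal{B}$ is always skew-adjoint with respect to the inner product $\langle x,y\rangle_{\mathcal{A}}=x^\top\mathcal{A}y$, which is the property effectively used for discrete energy conservation; the stronger Euclidean skew-symmetry stated in the lemma is precisely what the commutativity of the two circulants buys.
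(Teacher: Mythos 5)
The paper gives no proof of Lemma~\ref{lemma_1} at all---it is simply asserted---so your argument is not an alternative route but a replacement for a missing proof, and on its own terms it is correct. The circulant viewpoint is the right one: symmetry of $\mathcal{A}$ and skew-symmetry of $\mathcal{B}$ follow from the (anti-)palindromic generating rows exactly as you say; the identity $x^{\top}Ax=\tfrac12\sum_i(x_i+x_{i+1})^2$ (equivalently, the circulant eigenvalues $1+\cos(2\pi k/n)$) settles positive semi-definiteness; and reducing the skew-symmetry of $\mathcal{A}^{-1}\mathcal{B}$ to the commutation $AB=BA$, which holds because any two circulants are polynomials in the same cyclic shift, is the essential step---without it the Euclidean statement would be false, since in general one only gets $(\mathcal{A}^{-1}\mathcal{B})^{\top}=-\mathcal{B}\mathcal{A}^{-1}$.

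Your parity caveat is a genuine finding, not a quibble. If the number $n$ of grid points in a coordinate direction is even, the alternating vector $x_i=(-1)^i$ lies in the kernel of $A$ (the eigenvalue at $k=n/2$ is $1+\cos\pi=0$), so $\mathcal{A}$ is only positive semi-definite and $\mathcal{A}^{-1}$ does not exist; in that case the lemma as stated fails and the schemes \eqref{100}--\eqref{104}, which are built from $\mathcal{A}^{-1}\mathcal{B}$, are not even well defined. Lemma~\ref{lemma_1} therefore holds only under the tacit hypothesis that $I$, $J$, $K$ are odd. The paper's own experiments happen to satisfy this ($h=1/50$ on $[0,\tfrac12]^3$ gives $I=J=K=25$), but the hypothesis deserves to be made explicit. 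One small correction to your closing remark: the $\mathcal{A}$-weighted skew-adjointness you mention is not what the paper actually uses; the proof of Theorem~\ref{TT1} pairs the scheme with $P^{[1]+n}$ in the Euclidean inner product and invokes $\langle\mathcal{A}^{-1}\mathcal{B}P,P\rangle=0$, which is precisely the Euclidean skew-symmetry that your commutativity argument establishes, so the stronger statement is the one genuinely needed there.
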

			
			For sake of simplicity, we will use the following notation.
			\begin{itemize}
				\item[1.] Denote $\|\cdot\|_{l^2}$ the discrete norm of a scalar function, given by
				$$
				\|U^n\|_{l^2}:=\left(h_xh_yh_z\sum_{i=0}^{I-1}\sum_{j=0}^{J-1}\sum_{k=0}^{K-1}|U_{i,j,k}^n|^2\right)^{\frac{1}{2}}.
				$$
				\item[2.] Denote $S^n(t)$ a strongly continuous group generated by operator  
				\begin{equation}\label{dis_oper}
					A^n=\left[\begin{array}{cc}
						0&-\lambda\varepsilon^{-1}I_{3\times3}\\
						\lambda\mu^{-1}I_{3\times3}&0
					\end{array}\right]
				\end{equation}
				and 
				\begin{equation*}
					S^n(t)=\exp\left(tA^n\right)
					=\left[\begin{array}{cc}
						C^n(t)& -\varepsilon^{-\frac{1}{2}}\mu^{\frac{1}{2}}B^n(t) \\
						\varepsilon^{\frac{1}{2}}\mu^{-\frac{1}{2}}B^n(t) & C^n(t)
					\end{array}\right],
				\end{equation*}
				where $C^n(t)=\cos(\lambda t(\varepsilon\mu I)^{-\frac{1}{2}})$ and $B^n(t)=\sin(\lambda t(\varepsilon\mu I)^{-\frac{1}{2}})$ are the discrete cosine and sine operators, respectively. 
				\item[3.] The operator $A^n$ defined in \eqref{dis_oper} is skew-adjoint on $\mathbb{H}$ and thus generates a unitary $C_0$-group $S^n(t)=e^{tA^n}$ on $\mathbb{H}$ in view of Stone's theorem.
				
				\item[4.] Denote
				\begin{equation*}
					\begin{split}
						&u_{\cdot,j,k}=\left(u_{0,j,k},u_{1,j,k},\cdots,u_{I-1,j,k}\right)^{T},u^{[1]+n}=\frac{1}{2}\left(u^{[1]}+u^{n}\right),\\
						&u_{i,\cdot,k}=\left(u_{i,0,k},u_{0,1,k},\cdots,u_{i,J-1,k}\right)^{T},u^{[2]+[1]}=\frac{1}{2}\left(u^{[2]}+u^{[1]}\right),\\
						&u_{i,j,\cdot}=\left(u_{i,j,0},u_{i,j,1},\cdots,u_{i,j,K-1}\right)^{T},u^{[3]+[2]}=\frac{1}{2}\left(u^{[3]}+u^{[2]}\right).
					\end{split}
				\end{equation*}
				
			\end{itemize}
			
			\subsection{Splitting methods}\label{Splitting methods}
	Using the defined difference operators, matrices, and notation, we formulate two splitting methods for the stochastic Maxwell equations \eqref{sto_maxwell_equations} as follows.
    
			{\bf (1) Splitting method I}
			
			{\bf Stage 1.}  Compute intermediate variables ${\bf E}^{[1]},{\bf H}^{[1]}$ from ${\bf E}^{n}$ and ${\bf H}^{n}$:
			\begin{equation}\label{100}
				\begin{split}
					&\begin{pmatrix}
						\varepsilon E_{3_{{\cdot,j,k}}}^{[1]}\\
						\mu H_{2_{{\cdot,j,k}}}^{[1]}
					\end{pmatrix}-\begin{pmatrix}
						\varepsilon E_{3_{{\cdot,j,k}}}^{n}\\
						\mu H_{2_{{\cdot,j,k}}}^{n}
					\end{pmatrix}=\frac{\tau}{h_x}\mathcal{A}^{-1}\mathcal{B}\begin{pmatrix}
						E_{3_{{\cdot,j,k}}}^{[1]+n}\\
						H_{2_{{\cdot,j,k}}}^{[1]+n}
					\end{pmatrix},\\
					&\begin{pmatrix}
						\varepsilon E_{1_{{i,\cdot,k}}}^{[1]}\\
						\mu H_{3_{{i,\cdot,k}}}^{[1]}
					\end{pmatrix}-\begin{pmatrix}
						\varepsilon E_{1_{{i,\cdot,k}}}^{n}\\
						\mu H_{3_{{i,\cdot,k}}}^{n}
					\end{pmatrix}
					=\frac{\tau}{h_y}\mathcal{A}^{-1}\mathcal{B}\begin{pmatrix}
						E_{1_{{i,\cdot,k}}}^{[1]+n}\\
						H_{3_{{i,\cdot,k}}}^{[1]+n}
					\end{pmatrix},\\
					&\begin{pmatrix}
						\varepsilon E_{2_{{i,j,\cdot}}}^{[1]}\\
						\mu H_{1_{{i,j,\cdot}}}^{[1]}
					\end{pmatrix}-\begin{pmatrix}
						\varepsilon E_{2_{{i,j,\cdot}}}^{n}\\
						\mu H_{1_{{i,j,\cdot}}}^{n}
					\end{pmatrix}=\frac{\tau}{h_z}\mathcal{A}^{-1}\mathcal{B}\begin{pmatrix}
						E_{2_{{i,j,\cdot}}}^{[1]+n}\\
						H_{1_{{i,j,\cdot}}}^{[1]+n}
					\end{pmatrix}.
				\end{split}
			\end{equation}
			
			{\bf Stage 2.}  Compute intermediate variables ${\bf E}^{[2]},{\bf H}^{[2]}$ from ${\bf E}^{[1]}$ and ${\bf H}^{[1]}$:
			\begin{equation}\label{101}
				\begin{split}
					&\begin{pmatrix}
						\varepsilon E_{2_{{\cdot,j,k}}}^{[2]}\\
						\mu H_{3_{{\cdot,j,k}}}^{[2]}
					\end{pmatrix}-\begin{pmatrix}
						\varepsilon E_{2_{{\cdot,j,k}}}^{[1]}\\
						\mu H_{3_{{\cdot,j,k}}}^{[1]}
					\end{pmatrix}=-\frac{\tau}{h_x}\mathcal{A}^{-1}\mathcal{B}\begin{pmatrix}
						E_{2_{{\cdot,j,k}}}^{[2]+[1]}\\
						H_{3_{{\cdot,j,k}}}^{[2]+[1]}
					\end{pmatrix},\\
					&\begin{pmatrix}
						\varepsilon E_{3_{{i,\cdot,k}}}^{[2]}\\
						\mu H_{1_{{i,\cdot,k}}}^{[2]}
					\end{pmatrix}-\begin{pmatrix}
						\varepsilon E_{3_{{i,\cdot,k}}}^{[1]}\\
						\mu H_{1_{{i,\cdot,k}}}^{[1]}
					\end{pmatrix}=-\frac{\tau}{h_y}\mathcal{A}^{-1}\mathcal{B}\begin{pmatrix}
						E_{3_{{i,\cdot,k}}}^{[2]+[1]}\\
						H_{1_{{i,\cdot,k}}}^{[2]+[1]}
					\end{pmatrix},\\
					&\begin{pmatrix}
						\varepsilon E_{1_{{i,j,\cdot}}}^{[2]}\\
						\mu H_{2_{{i,j,\cdot}}}^{[2]}
					\end{pmatrix}-\begin{pmatrix}
						\varepsilon E_{1_{{i,j,\cdot}}}^{[1]}\\
						\mu H_{2_{{i,j,\cdot}}}^{[1]}
					\end{pmatrix}=-\frac{\tau}{h_z}\mathcal{A}^{-1}\mathcal{B}\begin{pmatrix}
						E_{1_{{i,j,\cdot}}}^{[2]+[1]}\\
						H_{2_{{i,j,\cdot}}}^{[2]+[1]}
					\end{pmatrix}.
				\end{split}
			\end{equation}
			
			{\bf Stage 3.}  Compute ${\bf E}^{n+1},{\bf H}^{n+1}$ from intermediate variables ${\bf E}^{[2]}$ and ${\bf H}^{[2]}$:
			\begin{equation}\label{22}
				\begin{pmatrix}
					{\bf E}_{{i,j,k}}^{n+1}\\
					{\bf H}_{{i,j,k}}^{n+1}
				\end{pmatrix}=S^n((\Delta W)_{i,j,k}^n)\begin{pmatrix}
					{\bf E}_{{i,j,k}}^{[2]}\\
					{\bf H}_{{i,j,k}}^{[2]}
				\end{pmatrix}. 
			\end{equation}
			
From \eqref{100} and \eqref{101}, we observe that \textbf{Splitting Method I} proves straightforward to implement. At each time step, it requires solving only one-dimensional linear algebraic systems while completely eliminating the need for storing intermediate variables, thereby minimizing memory requirements.

Through theoretical analysis and numerical experiments in subsequent sections, we demonstrate that \textbf{Splitting Method I} preserves the discrete energy conservation law. Applying the second splitting strategy from Section \ref{sect_3} yields another effective splitting approach.
			
			{\bf (2) Splitting method II}
			
			{\bf Stage 1.}  Compute intermediate variables ${\bf E}^{[1]},{\bf H}^{[1]}$ from ${\bf E}^{n}$ and ${\bf H}^{n}$:
			\begin{equation}\label{102}
				\begin{split}
					&E_{1_{{\cdot,j,k}}}^{[1]}=E_{1_{{\cdot,j,k}}}^{n},\quad H_{1_{{\cdot,j,k}}}^{[1]}=H_{1_{{\cdot,j,k}}}^{n}\\
					&\begin{pmatrix}
						\varepsilon E_{2_{{\cdot,j,k}}}^{[1]}\\
						\mu H_{3_{{\cdot,j,k}}}^{[1]}
					\end{pmatrix}-\begin{pmatrix}
						\varepsilon E_{2_{{\cdot,j,k}}}^{n}\\
						\mu H_{3_{{\cdot,j,k}}}^{n}
					\end{pmatrix}
					=-\frac{\tau}{h_x}\mathcal{A}^{-1}\mathcal{B}\begin{pmatrix}
						E_{2_{{\cdot,j,k}}}^{[1]+n}\\
						H_{3_{{\cdot,j,k}}}^{[1]+n}
					\end{pmatrix},\\
					&\begin{pmatrix}
						\varepsilon E_{3_{{\cdot,j,k}}}^{[1]}\\
						\mu H_{2_{{\cdot,j,k}}}^{[1]}
					\end{pmatrix}-\begin{pmatrix}
						\varepsilon E_{3_{{\cdot,j,k}}}^{n}\\
						\mu H_{2_{{\cdot,j,k}}}^{n}
					\end{pmatrix}
					=\frac{\tau}{h_x}\mathcal{A}^{-1}\mathcal{B}\begin{pmatrix}
						E_{3_{{\cdot,j,k}}}^{[1]+n}\\
						H_{2_{{\cdot,j,k}}}^{[1]+n}
					\end{pmatrix}.
				\end{split}
			\end{equation}
			
			{\bf Stage 2.}  Compute intermediate variables ${\bf E}^{[2]},{\bf H}^{[2]}$ from ${\bf E}^{[1]}$ and ${\bf H}^{[1]}$:
			\begin{equation}\label{103}
				\begin{split}
					&E_{2_{{i,\cdot,k}}}^{[2]}=E_{2_{{i,\cdot,k}}}^{[1]},\quad H_{2_{{i,\cdot,k}}}^{[2]}=H_{2_{{i,\cdot,k}}}^{[1]},\\
					&\begin{pmatrix}
						\varepsilon E_{3_{{i,\cdot,k}}}^{[2]}\\
						\mu H_{1_{{i,\cdot,k}}}^{[2]}
					\end{pmatrix}-\begin{pmatrix}
						\varepsilon E_{3_{{i,\cdot,k}}}^{[1]}\\
						\mu H_{1_{{i,\cdot,k}}}^{[1]}
					\end{pmatrix}
					=-\frac{\tau}{h_y}\mathcal{A}^{-1}\mathcal{B}\begin{pmatrix}
						E_{3_{{i,\cdot,k}}}^{[2]+[1]}\\
						H_{1_{{i,\cdot,k}}}^{[2]+[1]}
					\end{pmatrix},\\
					&\begin{pmatrix}
						\varepsilon E_{1_{{i,\cdot,k}}}^{[2]}\\
						\mu H_{3_{{i,\cdot,k}}}^{[2]}
					\end{pmatrix}-\begin{pmatrix}
						\varepsilon _{1_{{i,\cdot,k}}}^{[1]}\\
						\mu H_{3_{{i,\cdot,k}}}^{[1]}
					\end{pmatrix}
					=\frac{\tau}{h_y}\mathcal{A}^{-1}\mathcal{B}\begin{pmatrix}
						E_{1_{{i,\cdot,k}}}^{[2]+[1]}\\
						H_{3_{{i,\cdot,k}}}^{[2]+[1]}
					\end{pmatrix},
				\end{split}
			\end{equation}
			
			{\bf Stage 3.}  Compute intermediate variables ${\bf E}^{[3]},{\bf H}^{[3]}$ from ${\bf E}^{[2]}$ and ${\bf H}^{[2]}$:
			\begin{equation}\label{104}
				\begin{split}
					&E_{3_{{i,j,\cdot}}}^{[3]}=E_{3_{{i,j,\cdot}}}^{[2]},\quad H_{3_{{i,j,\cdot}}}^{[3]}=H_{3_{{i,j,\cdot}}}^{[2]}\\
					&\begin{pmatrix}
						\varepsilon E_{1_{{i,j,\cdot}}}^{[3]}\\
						\mu H_{2_{{i,j,\cdot}}}^{[3]}
					\end{pmatrix}-\begin{pmatrix}
						\varepsilon E_{1_{{i,j,\cdot}}}^{[2]}\\
						\mu H_{2_{{i,j,\cdot}}}^{[2]}
					\end{pmatrix}
					=-\frac{\tau}{h_z}\mathcal{A}^{-1}\mathcal{B}\begin{pmatrix}
						E_{1_{{i,j,\cdot}}}^{[3]+[2]}\\
						H_{2_{{i,j,\cdot}}}^{[3]+[2]}
					\end{pmatrix},\\
					&\begin{pmatrix}
						\varepsilon E_{2_{{i,j,\cdot}}}^{[3]}\\
						\mu H_{1_{{i,j,\cdot}}}^{[3]}
					\end{pmatrix}-\begin{pmatrix}
						\varepsilon E_{2_{{i,j,\cdot}}}^{[2]}\\
						\mu H_{1_{{i,j,\cdot}}}^{[2]}
					\end{pmatrix}
					=\frac{\tau}{h_z}\mathcal{A}^{-1}\mathcal{B}\begin{pmatrix}
						E_{2_{{i,j,\cdot}}}^{[3]+[2]}\\
						H_{1_{{i,j,\cdot}}}^{[3]+[2]}
					\end{pmatrix}.
				\end{split}
			\end{equation}
			
			{\bf Stage 4.}  Compute ${\bf E}^{n+1},{\bf H}^{n+1}$ from intermediate variables ${\bf E}^{[3]}$ and ${\bf H}^{[3]}$:
			\begin{equation}
				\begin{pmatrix}
					{\bf E}_{{i,j,k}}^{n+1}\\
					{\bf H}_{{i,j,k}}^{n+1}
				\end{pmatrix}=S^n((\Delta W)_{i,j,k}^n)\begin{pmatrix}
					{\bf E}_{{i,j,k}}^{[3]}\\
					{\bf H}_{{i,j,k}}^{[3]}
				\end{pmatrix}.
			\end{equation}
			
			For this four-stages method, we will prove in next subsection that it conserves energy.
			
			\subsection{Conservation properties}\label{Conservation properties}
			In this part, we derive the discrete energy conservation property for the operator splitting methods which proposed in the previous subsection.
			\begin{theorem}\label{TT1}
				For any $n\geq0$, {\bf Splitting method I} possesses the discrete energy conservation law
				\begin{align}
					\varepsilon\|{\bf E}^{n+1}\|^2+\mu\|{\bf H}^{n+1}\|^2=\varepsilon\|{\bf E}^{n}\|^2+\mu\|{\bf H}^{n}\|^2,\quad{\mathbb P}\text{-}a.s.,
				\end{align}
				here and in what follows, $\|{\bf f}\|^2=\|f_1\|_{l^2}^2+\|f_2\|_{l^2}^2+\|f_3\|_{l^2}^2$ for any ${\bf f}=(f_1,f_2,f_3)$.
			\end{theorem}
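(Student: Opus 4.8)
The plan is to establish the discrete energy conservation stage by stage, by showing that the total electromagnetic energy $\varepsilon\|{\bf E}\|^2+\mu\|{\bf H}\|^2$ is left invariant by each of the three stages \eqref{100}, \eqref{101} and \eqref{22} separately; the theorem then follows by composing the three invariances. The essential ingredients are the skew-symmetry of $\mathcal{A}^{-1}\mathcal{B}$ from Lemma \ref{lemma_1} for the two deterministic midpoint stages, and the unitarity of $S^{n}$ for the stochastic stage.

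For Stages 1 and 2 I would treat each one-dimensional line-update on its own. Writing a generic update in the compact form $P\big(V^{[1]}-V^{n}\big)=\tfrac{\tau}{h}\,\mathcal{A}^{-1}\mathcal{B}\,V^{[1]+n}$, where $P=\mathrm{diag}(\varepsilon I,\mu I)$, $V=(E_{a},H_{b})^{\top}$ is the stacked pair of components along the relevant coordinate line and $V^{[1]+n}=\tfrac12(V^{[1]}+V^{n})$, I take the discrete inner product of both sides with $V^{[1]+n}$. Since $P$ is symmetric, the mixed terms on the left cancel and that side collapses to $\tfrac12\big[(\varepsilon\|E_a^{[1]}\|_{l^2}^2+\mu\|H_b^{[1]}\|_{l^2}^2)-(\varepsilon\|E_a^{n}\|_{l^2}^2+\mu\|H_b^{n}\|_{l^2}^2)\big]$, while the right-hand side is $\tfrac{\tau}{h}\langle \mathcal{A}^{-1}\mathcal{B}\,V^{[1]+n},V^{[1]+n}\rangle=0$ by skew-symmetry. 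Hence the weighted $l^2$ energy of the pair $(E_a,H_b)$ is preserved on every line. Summing over all lines and over the three subsystems, and using the crucial combinatorial observation that the three pairs appearing in Stage 1, namely $\{E_3,H_2\}$, $\{E_1,H_3\}$, $\{E_2,H_1\}$, partition the six field components exactly once (and likewise the Stage 2 pairs $\{E_2,H_3\}$, $\{E_3,H_1\}$, $\{E_1,H_2\}$), shows that the full energy $\varepsilon\|{\bf E}\|^2+\mu\|{\bf H}\|^2$ is unchanged after each of Stages 1 and 2.

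For Stage 3 the update is the pointwise action of the unitary group $S^{n}((\Delta W)^{n}_{i,j,k})$. I would verify directly that the $2\times2$ block coupling each pair $(E_\alpha,H_\alpha)$ preserves $\varepsilon|E_\alpha|^2+\mu|H_\alpha|^2$: inserting the explicit entries $C^{n}=\cos(\lambda t(\varepsilon\mu)^{-1/2})$ and $B^{n}=\sin(\lambda t(\varepsilon\mu)^{-1/2})$, the cross terms cancel and the identity $\cos^2+\sin^2=1$ leaves the pointwise energy invariant. Summing over $\alpha=1,2,3$ and over all grid points, the weighted $l^2$ energy is preserved by Stage 3 for \emph{every} realization of the increment $(\Delta W)^{n}_{i,j,k}$, which is precisely the $\mathbb{P}$-a.s. conservation asserted. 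Chaining the three stage-wise invariances gives $\varepsilon\|{\bf E}^{n+1}\|^2+\mu\|{\bf H}^{n+1}\|^2=\varepsilon\|{\bf E}^{n}\|^2+\mu\|{\bf H}^{n}\|^2$.

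The step I expect to be the main obstacle is the energy identity for the midpoint stages: one must confirm that the skew-symmetry of $\mathcal{A}^{-1}\mathcal{B}$ holds with respect to the \emph{standard} discrete inner product (which it does, since the circulant matrices $A$ and $B$ commute), so that the symmetric weight $P=\mathrm{diag}(\varepsilon I,\mu I)$, appearing only on the left, converts that side into half the energy increment while the skew-symmetric term on the right is annihilated. The remaining bookkeeping—that the three line-subsystems of each deterministic stage act on complementary component pairs, so that their separate conservations assemble into conservation of the total energy—is routine but essential; I would also note in passing that each midpoint system is uniquely solvable, since the purely imaginary spectrum of the skew-symmetric $\mathcal{A}^{-1}\mathcal{B}$ makes $I\mp\tfrac{\tau}{2h}\mathcal{A}^{-1}\mathcal{B}$ invertible.
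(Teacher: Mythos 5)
Your proposal is correct and follows essentially the same route as the paper's own proof: both establish the result stage by stage, testing the midpoint updates \eqref{100} and \eqref{101} against the averaged states and invoking the skew-symmetry of $\mathcal{A}^{-1}\mathcal{B}$ from Lemma \ref{lemma_1} to kill the right-hand side, then using the unitarity of $S^n$ for the stochastic stage \eqref{22} before chaining the three invariances. The additional details you supply (the component-pair bookkeeping, the explicit trigonometric verification of unitarity, and the solvability of the midpoint systems) are refinements the paper leaves implicit, not a different method.
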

			\begin{proof}
				We multiply both sides of equations \eqref{100} with $h_yh_zP_{\cdot,j,k}^{[1]+n}$, $h_xh_zQ_{i,\cdot,k}^{[1]+n}$ and $h_xh_yR_{i,j,\cdot}^{[1]+n}$, here $P_{\cdot,j,k}^{[1]+n}=\left(E_{3_{\cdot,j,k}}^{[1]+n},H_{2_{\cdot,j,k}}^{[1]+n}\right)^T$, $Q_{i,\cdot,k}^{[1]+n}=\left(E_{1_{i,\cdot,k}}^{[1]+n},H_{3_{i,\cdot,k}}^{[1]+n}\right)^T$ and $R_{i,j,\cdot}^{[1]+n}=\left(E_{2_{i,j,\cdot}}^{[1]+n},H_{1_{i,j,\cdot}}^{[1]+n}\right)^T$ respectively. we obtain
				\begin{equation*}
					\begin{split}
						&h_xh_yh_z\left\langle\begin{pmatrix}
							\varepsilon E_{3_{{\cdot,j,k}}}^{[1]}-\varepsilon E_{3_{{\cdot,j,k}}}^{n}\\
							\mu H_{2_{{\cdot,j,k}}}^{[1]}-\mu H_{2_{{\cdot,j,k}}}^{n}
						\end{pmatrix},P_{\cdot,j,k}^{[1]+n}\right\rangle=\tau{h_y}{h_z}\left\langle\mathcal{A}^{-1}\mathcal{B}P_{\cdot,j,k}^{[1]+n},P_{\cdot,j,k}^{[1]+n}\right\rangle,\\
						&h_xh_yh_z\left\langle\begin{pmatrix}
							\varepsilon E_{1_{i,\cdot,k}}^{[1]}-\varepsilon E_{1_{i,\cdot,k}}^{[n]}\\
							\mu H_{3_{i,\cdot,k}}^{[1]}-\mu H_{3_{i,\cdot,k}}^{n}
						\end{pmatrix},Q_{i,\cdot,k}^{[1]+n}\right\rangle=\tau{h_x}{h_z}\left\langle\mathcal{A}^{-1}\mathcal{B}Q_{i,\cdot,k}^{[1]+n},Q_{i,\cdot,k}^{[1]+n}\right\rangle,\\
						&h_xh_yh_z\left\langle\begin{pmatrix}
							\varepsilon E_{2_{i,j,\cdot}}^{[1]}-\varepsilon E_{2_{i,j,\cdot}}^{n}\\
							\mu H_{1_{i,j,\cdot}}^{[1]}-\mu H_{1_{i,j,\cdot}}^{n}
						\end{pmatrix},R_{i,j,\cdot}^{[1]+n}\right\rangle=\tau{h_x}{h_y}\left\langle\mathcal{A}^{-1}\mathcal{B}R_{i,j,\cdot}^{[1]+n},R_{i,j,\cdot}^{[1]+n}\right\rangle.
					\end{split}
				\end{equation*}
				Summing over all terms in the above equations, and adding them together, From Lemma \ref{lemma_1}, it follows that:
				\begin{align}
					\varepsilon\|{\bf E}^{[1]}\|^2+\mu\|{\bf H}^{[1]}\|^2=\varepsilon\|{\bf E}^{n}\|^2+\mu\|{\bf H}^{n}\|^2,\quad{\mathbb P}\text{-}a.s.
				\end{align}
				Similarly, from \eqref{101}, we have
				\begin{align}
					\varepsilon\|{\bf E}^{[2]}\|^2+\mu\|{\bf H}^{[2]}\|^2=\varepsilon\|{\bf E}^{[1]}\|^2+\mu\|{\bf H}^{[1]}\|^2,\quad{\mathbb P}\text{-}a.s.
				\end{align}
				For \eqref{22}, note that even in the stochastic case $S^n$ is a unitary $C_0$-group on $\mathbb{H}$, thus we have
				\begin{equation}
					\left\|\begin{pmatrix}
						{\bf E}_{{i,j,k}}^{n+1}\\
						{\bf H}_{{i,j,k}}^{n+1}
					\end{pmatrix}\right\|_{\mathbb{H}}=\left\|\begin{pmatrix}
						{\bf E}_{{i,j,k}}^{[2]}\\
						{\bf H}_{{i,j,k}}^{[2]}
					\end{pmatrix}\right\|_{\mathbb{H}},\quad{\mathbb P}\text{-}a.s.
				\end{equation}
				Finally, combining the above equations gives the desired result. The proof is complete.
			\end{proof}
			
			Similarly, the discrete energy conservation property holds for the {\bf Splitting method II}, which is stated in the following theorem.
			\begin{theorem}\label{TT2}
				For any $n\geq0$, {\bf Splitting method II} possesses the discrete energy conservation law
				\begin{align}
					\varepsilon\|{\bf E}^{n+1}\|^2+\mu\|{\bf H}^{n+1}\|^2=\varepsilon\|{\bf E}^{n}\|^2+\mu\|{\bf H}^{n}\|^2,\quad{\mathbb P}\text{-}a.s.
				\end{align}
			\end{theorem}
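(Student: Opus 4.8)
The plan is to mirror the proof of Theorem \ref{TT1}, treating the four stages of \textbf{Splitting method II} in turn and showing that each stage leaves the total discrete energy $\varepsilon\|{\bf E}\|^2+\mu\|{\bf H}\|^2$ unchanged, then chaining the resulting equalities.

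First I would handle Stage 1, equation \eqref{102}. Since $E_{1}^{[1]}=E_{1}^{n}$ and $H_{1}^{[1]}=H_{1}^{n}$, the first components contribute identically to the energy at levels $[1]$ and $n$ and may be set aside. For the two active pairs $(E_2,H_3)$ and $(E_3,H_2)$, I would take the discrete inner product of each update equation with its own midpoint average—pairing the $(E_2,H_3)$ equation with $\big(E_{2}^{[1]+n},H_{3}^{[1]+n}\big)^{\top}$ and the $(E_3,H_2)$ equation with $\big(E_{3}^{[1]+n},H_{2}^{[1]+n}\big)^{\top}$, weighted by the appropriate area factors $h_yh_z$—and sum over the grid line, exactly as in Theorem \ref{TT1}. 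On the right-hand side each term has the form $\langle \mathcal{A}^{-1}\mathcal{B}\,v,v\rangle$, which vanishes because $\mathcal{A}^{-1}\mathcal{B}$ is skew-symmetric by Lemma \ref{lemma_1}. The left-hand side telescopes into one half the difference of energies of the active components between levels $[1]$ and $n$, so that, together with the frozen components, one obtains
\[
\varepsilon\|{\bf E}^{[1]}\|^2+\mu\|{\bf H}^{[1]}\|^2=\varepsilon\|{\bf E}^{n}\|^2+\mu\|{\bf H}^{n}\|^2,\quad{\mathbb P}\text{-}a.s.
\]

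Next I would apply the identical argument to Stage 2, equation \eqref{103}, where $(E_2,H_2)$ are now frozen and the active pairs are $(E_3,H_1)$ and $(E_1,H_3)$, and to Stage 3, equation \eqref{104}, where $(E_3,H_3)$ are frozen and the active pairs are $(E_1,H_2)$ and $(E_2,H_1)$. In each case the skew-symmetry of $\mathcal{A}^{-1}\mathcal{B}$ annihilates the right-hand side and yields energy equality between consecutive intermediate levels, giving $\varepsilon\|{\bf E}^{[2]}\|^2+\mu\|{\bf H}^{[2]}\|^2=\varepsilon\|{\bf E}^{[1]}\|^2+\mu\|{\bf H}^{[1]}\|^2$ and then $\varepsilon\|{\bf E}^{[3]}\|^2+\mu\|{\bf H}^{[3]}\|^2=\varepsilon\|{\bf E}^{[2]}\|^2+\mu\|{\bf H}^{[2]}\|^2$. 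Finally, Stage 4 applies the operator $S^{n}((\Delta W)_{i,j,k}^{n})$ pointwise; since $S^{n}$ is a unitary $C_0$-group on $\mathbb{H}$, it preserves the $\mathbb{H}$-norm almost surely, whence $\varepsilon\|{\bf E}^{n+1}\|^2+\mu\|{\bf H}^{n+1}\|^2=\varepsilon\|{\bf E}^{[3]}\|^2+\mu\|{\bf H}^{[3]}\|^2$. Composing the four equalities delivers the claim.

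The telescoping of the left-hand sides and the vanishing of the quadratic forms are word-for-word the computation in Theorem \ref{TT1}, hence routine. The only point demanding care—and the main obstacle—is the bookkeeping of which field components are frozen versus active at each stage: one must verify against \eqref{102}–\eqref{104} that across the three deterministic stages the frozen/active partition is consistent, so that every component is updated by a genuinely skew-symmetric (hence norm-preserving) coupling, and that no component's energy is inadvertently double-counted or dropped when the frozen contributions are combined with the active telescoped differences. Once this partition is checked, the stagewise conservation follows immediately and the proof is complete.
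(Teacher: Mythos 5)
Your proposal is correct and is exactly the argument the paper intends: the paper omits the proof of Theorem \ref{TT2}, stating only that it follows ``similarly'' to Theorem \ref{TT1}, and your stage-by-stage adaptation (skew-symmetry of $\mathcal{A}^{-1}\mathcal{B}$ from Lemma \ref{lemma_1} killing the right-hand sides in Stages 1--3, with the frozen components $(E_1,H_1)$, $(E_2,H_2)$, $(E_3,H_3)$ passing through unchanged, followed by unitarity of $S^n$ in Stage 4) is precisely that adaptation. The frozen/active bookkeeping you flag as the main point of care is verified correctly against \eqref{102}--\eqref{104}, so nothing is missing.
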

			
\section{Numerical experiments}\label{Numerical experiments2}
This section verifies the energy conservation of Splitting Method I and Splitting Method II through numerical examples. In the experiment, we take uniform spatial stepsizes $h_x = h_y = h_z = h$. The $Q$-Wiener process $\{W(t), t \geq 0\}$ has the following Karhunen-Loève expansion, with $\{\eta_{m,\ell,q}\}_{m,\ell,q\in\mathbb{N}}$ and $\{e_{m,\ell,q}(x,y,z)\}_{m,\ell,q\in\mathbb{N}}$ chosen as:
$$\eta_{m,\ell,q} = \frac{1}{\sqrt{m^{3} + \ell^{3} + q^{3}}},$$
and
$$e_{m,\ell,q}(x,y,z) = 2\sqrt{2} \sin \left(m \pi x\right) \sin \left(\ell \pi y\right) \sin \left(q \pi z\right).$$
The noise increment is defined as:
$$(\Delta W)_{i,j,k}^{n} := W_{i,j,k}^{n+1} - W_{i,j,k}^{n} = 2 \sqrt{2\tau} \sum_{m,\ell,q=1}^{M} \frac{1}{\sqrt{m^{3} + \ell^{3} + q^{3}}} \sin \left(m \pi x_{i}\right) \sin \left(\ell \pi y_{j}\right) \sin \left(q \pi z_{k}\right) \xi_{m,\ell,q}^{n},$$
where $\left\{\xi_{m,\ell,q}^{n}\right\}$ are independent $\mathcal{N}(0,1)$ random variables. Since $(\Delta W)_{i,j,k}^{n}$ exhibits negligible variation for $M > 10$, we truncate at $M = 10$ to enhance computational efficiency.
\subsection{3D Maxwell Numerical Example}
The numerical experiment considers the 3D stochastic Maxwell equations \eqref{sto_maxwell_equations} on the rectangular domain $D = \left[0, \frac{1}{2}\right]^3$ with the following initial conditions:
$$\left\{ 
\begin{array}{l} 
E_1(x,y,z,0) = \cos(4\pi(x + y + z)),  \\
E_2(x,y,z,0) = -2E_1(x,y,z,0), \\
E_3(x,y,z,0) = E_1(x,y,z,0), 
\end{array}
\right.$$
and
$$\left\{ 
\begin{array}{l} 
H_1(x,y,z,0) = \sqrt{3} E_1(x,y,z,0), \\
H_2(x,y,z,0) = 0, \\
H_3(x,y,z,0) = -\sqrt{3} E_3(x,y,z,0).
\end{array}
\right.$$

According to Theorems \ref{TT1} and \ref{TT2}, Splitting Method I and Splitting Method II preserve the discrete energy conservation law almost surely. To verify this property, we fix $\epsilon = 1$, $\mu = 1$, $\lambda = 0, 0.1, 1, 10$, time step $\tau = \frac{1}{2^5}$, spatial step size $h = \frac{1}{50}$, and truncation parameter $M = 10$. We then plot the evolution of discrete energy $\mathcal{H}(Z(t))$ using numerical solutions from both splitting methods at time nodes $t_i = i\tau$ over $T = 10$.
\begin{figure}
	\begin{center}	
         \subfigure[]{
			\begin{minipage}[t]{0.45\linewidth}
				\includegraphics[width=1\textwidth]{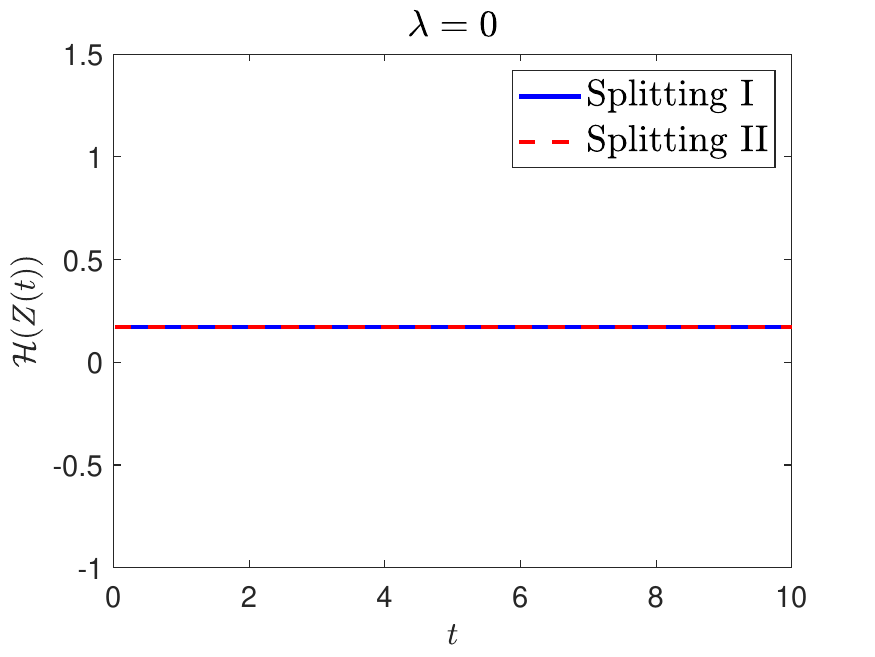}
			\end{minipage}
        
		}
		\subfigure[]{
			\begin{minipage}[t]{0.45\linewidth}
				\includegraphics[width=1\textwidth]{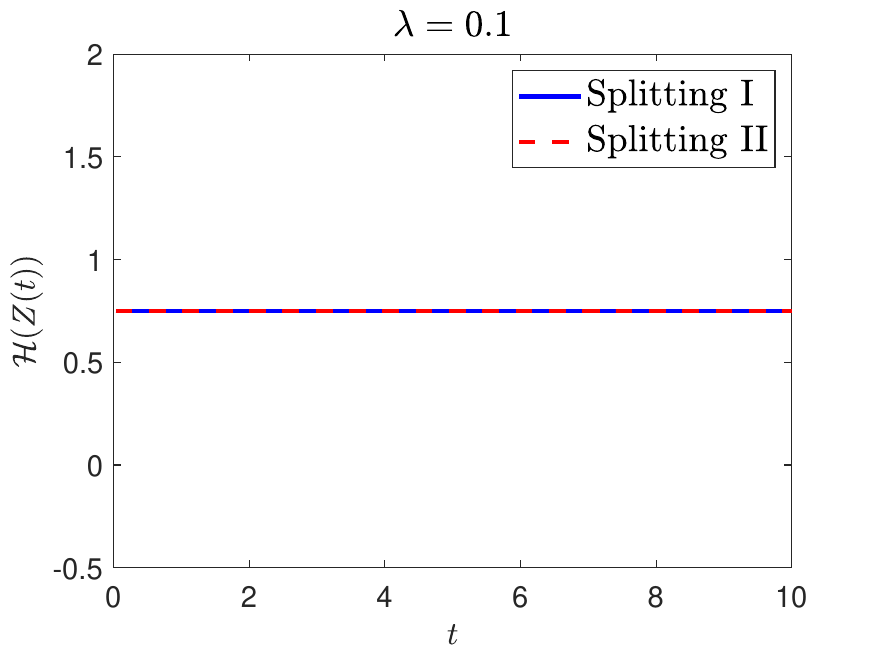}
			\end{minipage}
		}
        \subfigure[]{
			\begin{minipage}[t]{0.45\linewidth}
				\includegraphics[width=1\textwidth]{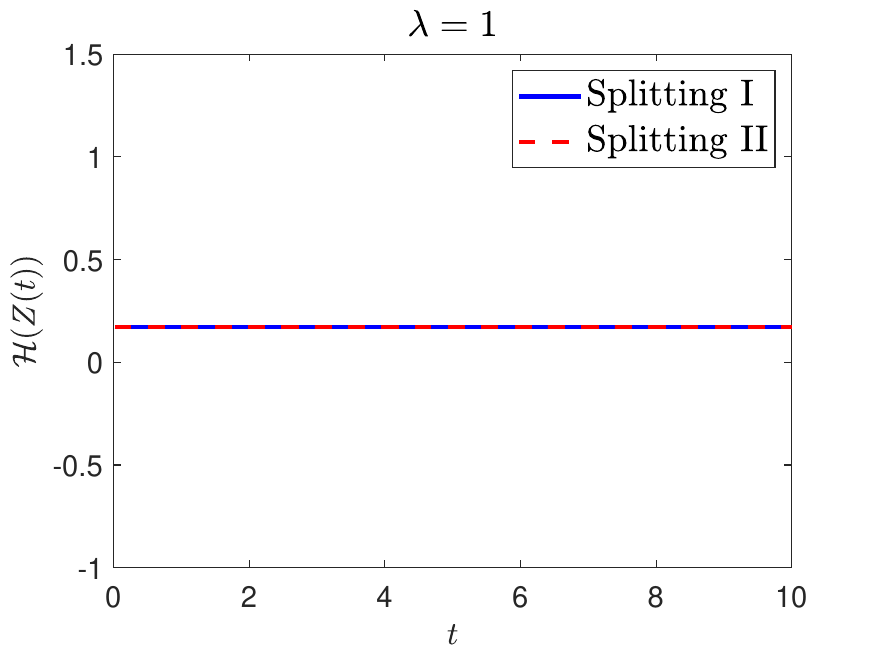}
			\end{minipage}
		}
        \subfigure[]{
			\begin{minipage}[t]{0.45\linewidth}
				\includegraphics[width=1\textwidth]{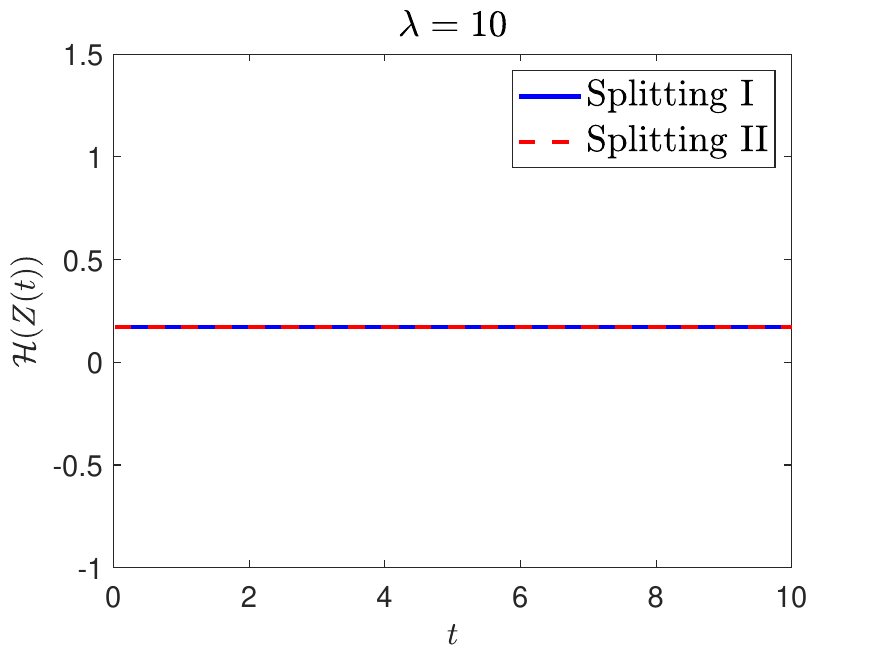}
			\end{minipage}
		}
	\end{center}
	\caption{
		  The discrete energy until $T = 10$ with $\tau = \frac{1}{2^5}$, $h_x=h_y=h_z = \frac{1}{50}$.
		\label{Energy_lambda}}
\end{figure}	
 In Figure \ref{Energy_lambda}, the solid line represents the discrete energy evolution of Stochastic Splitting Method I, and the dashed line corresponds to Stochastic Splitting Method II. The figure clearly shows energy evolution approximating horizontal lines over time, indicating that discrete energies for both splitting methods remain nearly constant and match the initial energy. This observation aligns with the theoretical results established in previous theorems. Furthermore, despite testing different noise intensity parameters $\lambda = 0, 0.1, 1, 10$, all discrete energy curves maintain their approximately horizontal profiles. These results demonstrate that both numerical methods preserve the energy conservation property unconditionally, consistent with theoretical predictions.
\begin{figure}
	\begin{center}	
         \subfigure[]{
			\begin{minipage}[t]{0.4\linewidth}
				\includegraphics[width=1\textwidth]{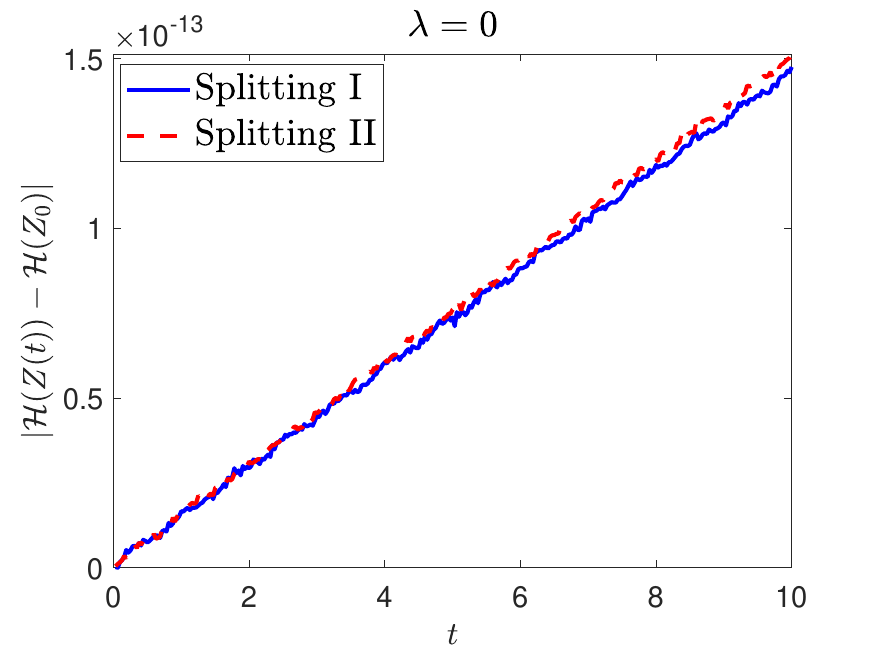}
			\end{minipage}
		}
		\subfigure[]{
			\begin{minipage}[t]{0.4\linewidth}
				\includegraphics[width=1\textwidth]{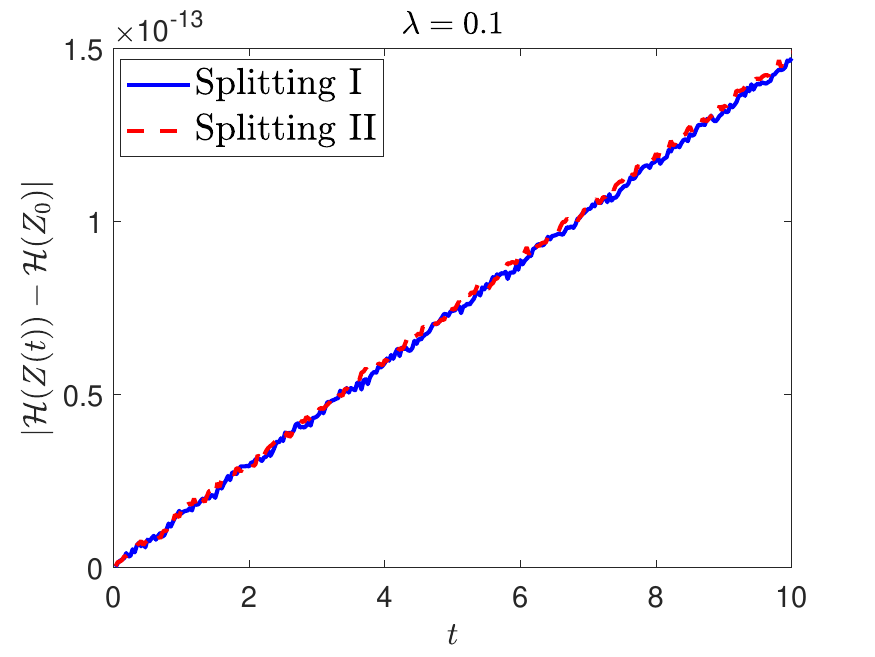}
			\end{minipage}
		}
        \subfigure[]{
			\begin{minipage}[t]{0.4\linewidth}
				\includegraphics[width=1\textwidth]{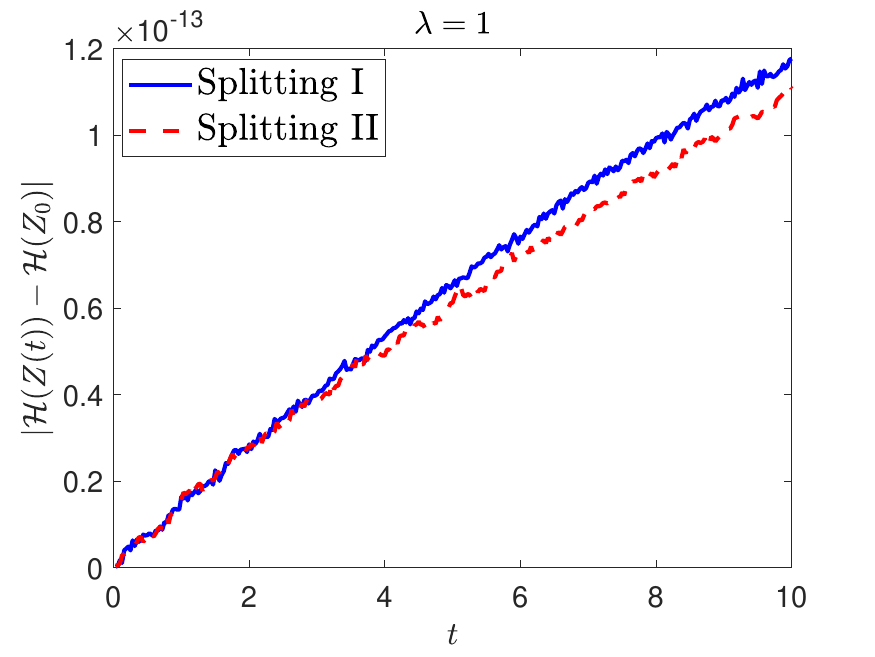}
			\end{minipage}
		}
        \subfigure[]{
			\begin{minipage}[t]{0.4\linewidth}
				\includegraphics[width=1\textwidth]{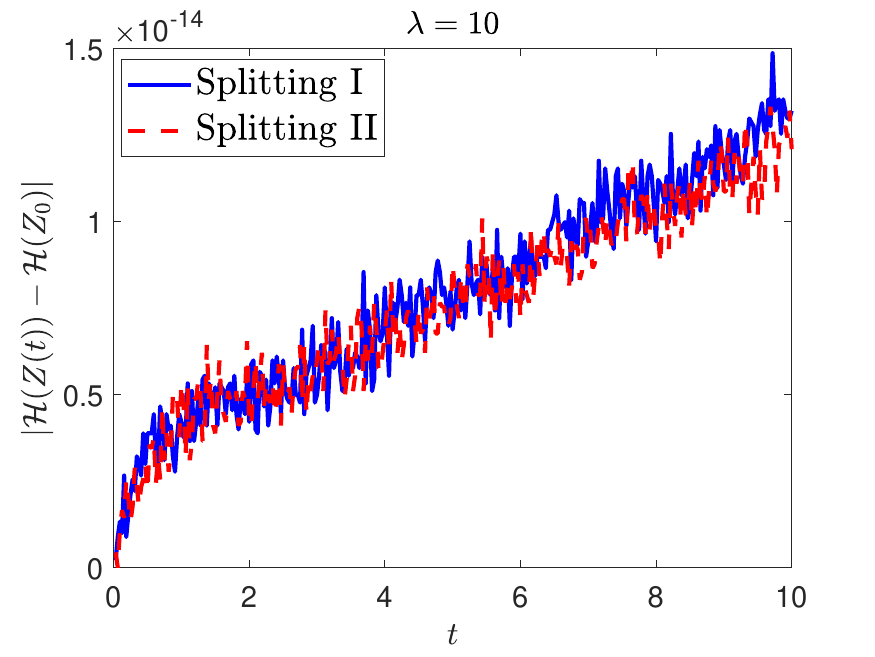}
			\end{minipage}
		}
	\end{center}
	\caption{
		The errors of discrete energy until $T = 10$ with $\tau = \frac{1}{2^5}$, $h_x=h_y=h_z = \frac{1}{50}$.
		\label{Energy_error_lambda}}
\end{figure}		

To further investigate the magnitude of error between discrete energy and initial energy, Figure \ref{Energy_error_lambda} plots the global energy error at $T=10$. Using fixed parameters $\lambda = 0, 0.1, 1, 10$, $\tau = \frac{1}{2^5}$, $h = \frac{1}{50}$, and $M=10$, we observe that the global errors in the discrete energy conservation law consistently reach $10^{-13}$ magnitude. Due to the accumulated rounding errors of the computer and the approximations of the iterative solver, the energy error exhibits a linear growth. By excluding these computational errors, it can be considered that the splitting methods still preserve the discrete energy conservation law.

To further verify that the energy conservation properties of both splitting methods remain unaffected by varying noise trajectories, we perform simulations over $T=10$ with three distinct random paths. By comparing the energy evolution of numerical solutions against initial energy, we test method conservation properties. 
\begin{figure}
	\begin{center}	
         \subfigure[]{
			\begin{minipage}[t]{0.4\linewidth}
				\includegraphics[width=1\textwidth]{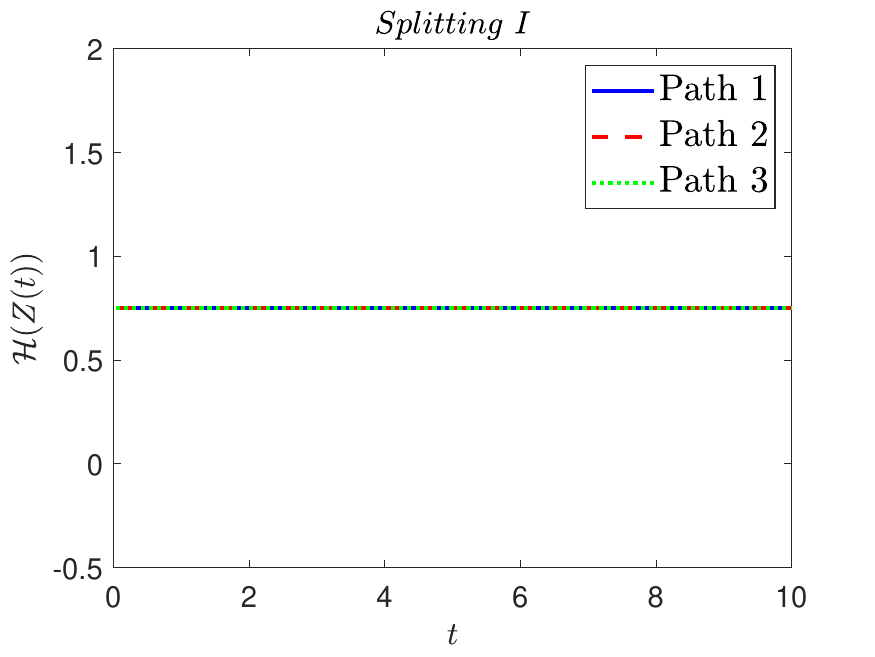}
			\end{minipage}
		}
		\subfigure[]{
			\begin{minipage}[t]{0.4\linewidth}
				\includegraphics[width=1\textwidth]{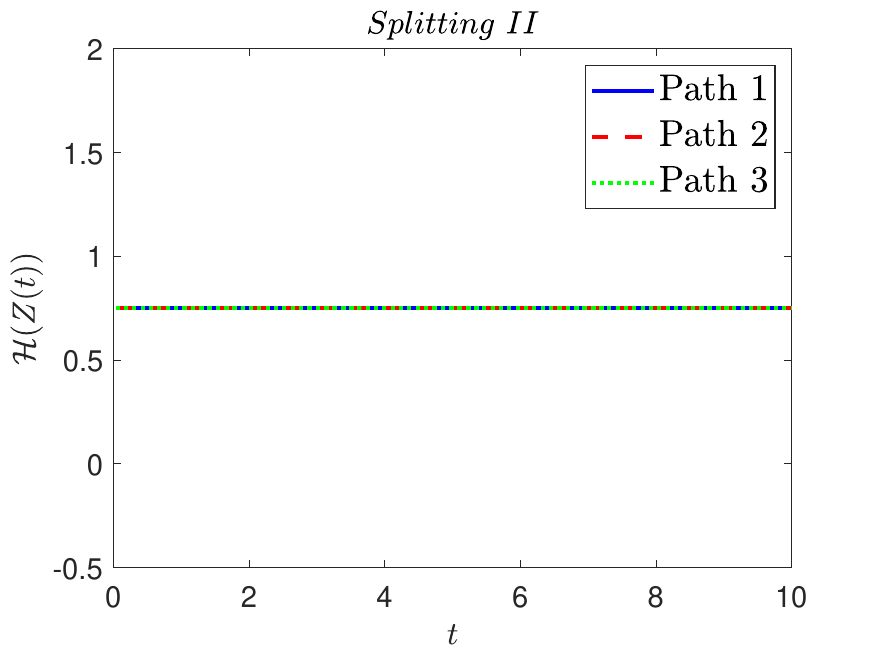}
			\end{minipage}
		}
	\end{center}
	\caption{
		The discrete energy until $T = 10$ with $\tau = \frac{1}{2^5}$, $h = \frac{1}{50}$.
		\label{Energy_path}}
\end{figure}		
\begin{figure}
	\begin{center}	
         \subfigure[]{
			\begin{minipage}[t]{0.4\linewidth}
				\includegraphics[width=1\textwidth]{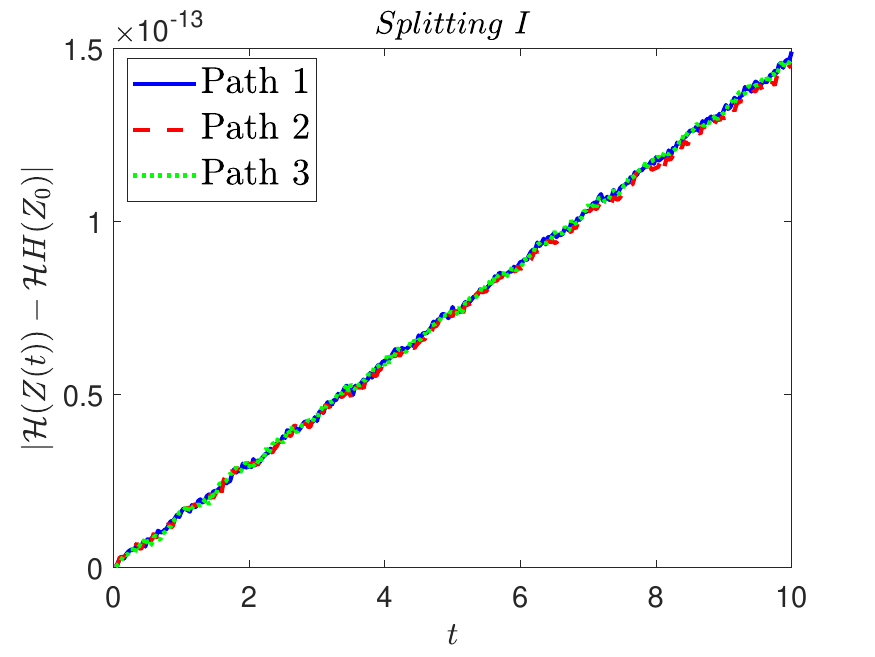}
			\end{minipage}
		}
		\subfigure[]{
			\begin{minipage}[t]{0.4\linewidth}
				\includegraphics[width=1\textwidth]{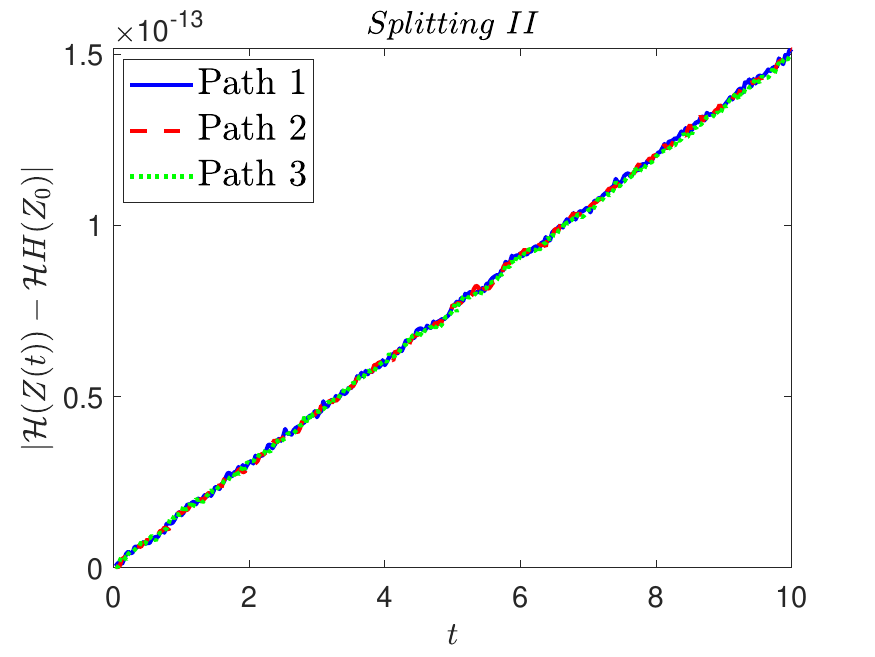}
			\end{minipage}
		}
	\end{center}
	\caption{
		The discrete energy error until $T = 10$ with $\tau = \frac{1}{2^5}$, $h = \frac{1}{50}$.
		\label{Energy_errror_path}}
\end{figure}	
Figure \ref{Energy_path} demonstrates that both Splitting Methods I and II maintain strict energy conservation across all tested paths, with energy curves parallel to the $t$-axis throughout computations. Figure \ref{Energy_errror_path} also shows that the energy errors are basically the same under the three different noise paths. These results confirm experimental independence from noise path selection, highlighting method stability and reliability.

Finally, we numerically investigate the temporal mean-square convergence orders of Splitting Methods I and II. We fix parameters $\epsilon = 1$, $\mu = 1$, $\lambda = 0.1$, $h = \frac{1}{50}$, and $T = \frac{1}{4}$, using time steps $\tau = 2^{-4}, 2^{-5}, 2^{-6}, 2^{-7}, 2^{-8}$. As the exact solution for this initial value is unknown, we employ the numerical solution computed by Splitting Method I with $\tau = 2^{-9}$ as the reference solution. The $L^2$-error is defined as $\|Error\|_{L^2} = \left(\mathbb{E}\|e\|_{L^2}^2\right)^{\frac{1}{2}}$, where $\|e\|_{L^2}^2$ is given by:
\begin{align*}
\|e\|_{L^2}^2=\Delta x\Delta y\Delta z\sum_{i,j,k}\sum_{m=1}^{3}\left[\left((E_m)_{i,j,k}^N-(E_m^{ref})_{i,j,k}^N\right)^2+\left((H_m)_{i,j,k}^N-(H_m^{ref})_{i,j,k}^N\right)^2\right].
\end{align*}
\begin{table}[!htb]
	\begin{center}
		\caption{The order of error until $T = \frac{1}{4}$ with $h= \frac{1}{50}$.}\label{Order}
		\label{tab:table1}
		\begin{tabular}{l|ll|ll}\toprule
			&\multicolumn{2}{c|}{Splitting method I} & \multicolumn{2}{c}{Splitting method II}\\ 
			\cline{2-5} 
			&$\|Error\|_{L^2}$ & Order &$\|Error\|_{L^2}$ & Order \\\hline
                $\tau=1/2^4$& $4.72E-1$&$-$&$6.64E-1$&$-$\\
			$\tau=1/2^5$ & $1.72E-1$ & $1.46$ & $3.34E-1$&$0.97$\\
			$\tau=1/2^6$ & $7.94E-2$ & $1.11$ & $1.71E-1$&$0.98$\\
			$\tau=1/2^7$& $4.68E-2$ & $0.76$ & $7.91E-2$&$0.92$\\
			$\tau=1/2^8$& $2.40E-2$&$0.96$&$4.67E-2$&$0.96$\\
			\bottomrule
		\end{tabular}
	\end{center}
\end{table}
As shown in Table \ref{Order}, both splitting methods strong order of converge is approximately 1 in the time direction.
		
\section{Conclusions}
We successfully develop and analyze two novel operator splitting methods, Splitting I and Splitting II, for structure-preserving numerical solutions of three-dimensional stochastic Maxwell equations driven by multiplicative noise. It is rigorously proven that both splitting methods preserve the discrete electromagnetic energy conservation law. Specifically, it is confirmed that the midpoint method for the deterministic part maintains the quadratic invariant associated with the anti-symmetric operator, while the exact integration of the stochastic part exhibits unitarity. The combination of these two ensures the strict energy conservation of the entire algorithm. Numerical experiments fully validate the theoretical results: under different noise intensities and random paths, the discrete energy remains constant over time, and the global energy error reaches the magnitude of machine precision, verifying the excellent conservation performance of the methods in practice. Convergence studies further show that both splitting methods achieve first-order convergence in the mean square sense in the time direction. However, both the theoretical research and numerical experiments in this paper assume that the noise is a Q-Wiener process. For other types of noise, it is necessary to further verify whether the existing methods still possess discrete energy conservation. Additionally, for other boundary conditions, such as absorbing boundaries or impedance boundaries, further research is required to determine whether the methods satisfy the discrete conservation laws.
\section*{Acknowledgments}
The authors would like to express their appreciation to the referees for their useful comments and the editors. Liying Zhang is supported by the National Natural Science Foundation of China (No. 11601514 and No. 11971458), the Fundamental Research Funds for the Central Universities (No. 2023ZKPYL02 and No. 2023JCCXLX01) and the Yueqi Youth Scholar Research Funds for the China University of Mining and Technology-Beijing (No. 2020YQLX03), 2025 Basic Sciences Initiative in Mathematics and Physics. Lihai Ji is supported by the National Natural Science Foundation of China (No. 12171047).



\begin{thebibliography}{99}

\bibitem{CC1} C. C. Chen, J. L. Hong and L. Y. Zhang, Preservation of physical properties of stochastic Maxwell equations with additive noise via stochastic multi-symplectic methods, J. Comput. Phys., 306(2016), 500--519. https://doi.org/10.1016/j.jcp.2015.11.052.

\bibitem{CC2} C. C. Chen, J. L. Hong and L. H. Ji, Mean-square convergence of a semiDiscrete scheme for stochastic {M}axwell equations, SIAM J. Numer. Anal., 57 (2019), 728--750. https://doi.org/10.1137/18M1170431

\bibitem{CC3}C. C. Chen and J. L. Hong and L. H. Ji, Runge-{K}utta semidiscretizations for stochastic {M}axwell equations with additive noise, SIAM J. Numer. Anal., 57 (2019), 702--727. https://doi.org/10.1137/18M1193372

\bibitem{DD1} D. Cohen, J. B. Cui, J. L. Hong and L. Y. Sun, Exponential integrators for stochastic Maxwell's equations driven by It\^o noise, J. Comput. Phys., 410 (2020), 109382, https://doi.org/10.1016/j.jcp.2020.109382

\bibitem{JJ1} J. L. Hong, L. H. Ji, L. Y. Zhang and J. X. Cai, An energy-conserving method for stochastic Maxwell equations with multiplicative noise, J. Comput. Phys., 350 (2017), 216--229. https://doi.org/10.1016/j.jcp.2017.09.030

\bibitem{JJ2} J. X. Cai, J. L. Hong, Y. S. Wang and Y. Z. Gong, Two energy-conserved splitting methods for three-dimensionatime-domain Maxwell's equations and the convergence analysis, SIAM J. NUMER. ANAL., 53 (2015), 1918--1940. https://doi.org/10.1137/140971609

\bibitem{EE1} K. J. Engel and N. Rainer, One-parameter semigroups for linear evolution equations, Springer-Verlag, New York, 2000. https://link.springer.com/book/10.1007/b97696

\bibitem{LL1}L. Kurt and T. Schäfer, Propagation of ultra-short solitons in stochastic Maxwell's equations, J. Math. Phys., 55 (2014), 011503. https://doi.org/10.1063/1.4859815

\bibitem{ZZ1} L. Y. Zhang, C. C. Chen, J. L. Hong and L. H. Ji, A review on stochastic multi-symplectic methods for stochastic {M}axwell equations, Commun. Appl. Math. Comput., 1 (2019), 467--501. https://doi.org/10.1007/s42967-019-00017-w

\bibitem{MM1}M. Francoeur and M. P. Mengüç, Role of fluctuational electrodynamics in near-field radiative heat transfer, J. Quant. Spectrosc. Radiat. Transf., 109 (2008), 280--293. https://doi.org/10.1016/j.jqsrt.2007.08.017

\bibitem{SS1}S. M. Rytov, Y. A. Kravtsov and V. I. Tatarskii, Principles of Statistical Radiophysics 3: Elements of Random Fields, Springer-Verlag, Berlin, 1989. https://link.springer.com/book/9783642726873

\bibitem{WW1} W. B. Chen, X. J. Li and D. Liang, Energy-conserved splitting FDTD methods for Maxwell equations, Numer. Math., 108 (2008), 445--485. https://doi.org/10.1007/s00211-007-0123-9

\bibitem{WW2}W. B. Chen, X. J. Li and D. Liang, Energy-conserved splitting FDTD methods for Maxwell equations in three dimensions. SIAM J. Numer. Anal., 48 (2010), 1530–1554. https://doi.org/10.1137/090765857




\end{thebibliography}
\end{document}